\newtheorem{theorem}{Theorem}[section]
\newtheorem{lemma}[theorem]{Lemma}
\theoremstyle{definition}
\newtheorem{proposition}[theorem]{Proposition}
\theoremstyle{remark}
\newtheorem{remark}[theorem]{Remark}
\numberwithin{equation}{section}
\begin{document}

\title{Abelian ideals with given dimension in Borel subalgebras}

\author{Li Luo}
\address{Institute of mathematics, Academy of
Mathematics and Systems Science, Chinese Academy of Sciences,
Beijing 100190, China}
\email{luoli@amss.ac.cn}


\subjclass[2000]{Primary 17BXX}



\keywords{abelian ideal, simple Lie algebra, root system.}

\begin{abstract}
A well-known Peterson's theorem says that the number of abelian
ideals in a Borel subalgebra of a rank-$r$ finite dimensional simple
Lie algebra is exactly $2^r$. In this paper, we determine the
dimensional distribution of abelian ideals in a Borel subalgebra of
finite dimensional simple Lie algebras, which is a refinement of the
Peterson's theorem capturing more Lie algebra invariants.
\end{abstract}

\maketitle


.

\section{Introduction}
Let $\mathfrak{g}$ be a finite dimensional complex simple Lie
algebra and let $\mathfrak{b}$ be a Borel subalgebra of
$\mathfrak{g}$. The study of abelian ideals in $\mathfrak{b}$ can be
traced back to the work by Schur \cite{Si} (1905). It has recently
drawn considerable attention. Kostant \cite{K1} (1998) mentioned
Peterson's $2^r$-theorem saying that the number of abelian ideals in
$\mathfrak{b}$ is exactly $2^r$, where $r$ is the rank of
$\mathfrak{g}$. Moreover, Kostant found a relation between abelian
ideals of a Borel subalgebra and the discrete series representations
of the Lie group. Spherical orbits were described by Panyushev and
R\"{o}hrle \cite{PR} (2001) in terms of abelian ideals. Furthermore,
Panyushev \cite{P1} (2003) discovered a correspondence of maximal
abelian ideals of a Borel subalgebra to long positive roots. Suter
\cite{S} (2004) determined the maximal dimension among abelian
subalgebras of a finite-dimensional simple Lie algebra purely in
terms of certain invariants and gave a uniform explanation for
Panyushev's result. Kostant \cite{K2} (2004) showed that the powers
of the Euler product and abelian ideals of a Borel subalgebra are
intimately related. Cellini and Papi \cite{CP3} (2004) had a
detailed study of certain remarkable posets which form a natural
partition of all abelian ideals of a Borel subalgebra. For
symplectic Lie algebras, we constructed a relationship between the
abelian ideals of Borel subalgebras and the cohomology of their
maximal nilpotent subalgebras \cite{L}. There are also many papers
on ad-nilpotent ideals in Borel subalgebras, such as \cite{CP1, CP2,
P2, Rg, R, Se}. It is also remarkable that the abelian ideals of
some tree diagram Lie algebras, which are analogues of Borel
subalgebras, are used to solve certain evolution partial
differential equations by Xu \cite{X} (2006).

Influenced by the above works, we will determine in this paper the
number of abelian ideals with given dimension in a Borel subalgebra.
It is a refinement of the Peterson's theorem capturing more Lie
algebra invariants.

There is an injection from abelian ideals to affine Weyl group, and
the dimension of an abelian ideal is equal to the length of the
corresponding element in affine Weyl group (c.f. \cite{S}). So our
results may be an accessorial tool to catch on the affine Weyl
group. While our results are much more precise than Peterson's
theorem, the consequences would be of broad interest. To compare the
results of type $B$ and $C$ (see Sec.4.1 and Sec.4.2) enable us to
be aware of that the distribution of abelian ideals does not depend
on the Weyl groups, though these groups played important roles in
Peterson's approach. On the other hand, the distributions of abelian
ideals for type $B$ and $D$ are uniform (see Sec.4.2 and Sec.4.3),
Although their root systems are different from each other. We can
expect this result because both of them correspond to orthogonal Lie
algebras. Moreover, our results can also check some data
corresponding to maximal abelian ideals, which are listed in
\cite{PR,Rg}.

The paper is organized as follows. In Section 2, we recall some
notations and definitions on root systems and give some lemmas which
are used in our calculation. Section 3 is devoted to the computation
of type $A$, and the result is described by a restricted partition
function. The computation of types $B$, $C$ and $D$ is in Section 4,
where the generating functions of the number of abelian ideals with
given dimension in Borel subalgebras is obtained. For exceptional
types $E$, $F$ and $G$, we enumerate their abelian ideals and obtain
the detailed information in Section 5.

\section{Preliminary}
Let $\mathfrak{g}$ be a finite dimension complex simple algebra with
rank $r$. Fix a Cartan subalgebra $\mathfrak{h}$ of $\mathfrak{g}$.
Denote by $\Phi$ its root system with a simple roots basis $\Pi$ and
positive roots $\Phi_+$. We have the Cartan root space decomposition
$\mathfrak{g}=\mathfrak{h}\bigoplus(\bigoplus_{\alpha\in
\Phi}\mathfrak{g}_\alpha)$. Then
$\mathfrak{b}=\mathfrak{h}\bigoplus(\bigoplus_{\alpha\in
\Phi_+}\mathfrak{g}_\alpha)$  is the associated Borel subalgebra.

Denote by $\mathcal {I}$ the set of all abelian ideals in
$\mathfrak{b}$. As mentioned in \cite{S}, there is a bijection as
follows:
\begin{eqnarray}
\Upsilon:=\{\Psi\subset \Phi_+\mid \Psi\dotplus \Phi_+\subset \Psi,
\Psi\dotplus \Psi=\emptyset\}&\leftrightarrow& \mathcal
{I}\nonumber\\\Psi&\mapsto&
I_\Psi:=\bigoplus_{\alpha\in\Psi}\mathfrak{g}_\alpha,
\end{eqnarray}
where $\Psi\dotplus \Phi_+:=(\Psi+\Phi_+)\cap\Phi_+$ and
$\Psi\dotplus \Psi:=(\Psi+\Psi)\cap\Phi_+$. It is clear that for any
$\Psi\in\Upsilon$,
\begin{equation}
\dim I_\Psi=|\Psi|.
\end{equation}
(In this paper, we always use $|A|$ for a set $A$ to denote the
total number of elements in $A$.)

Define
\begin{equation}
\mathcal {I}^{(i)}:=\{I\in\mathcal {I}\mid \dim I=i\}
\end{equation}
and
\begin{equation}
\Upsilon^{(i)}:=\{\Psi\in\Upsilon\mid |\Psi|=i\}.
\end{equation}
Thanks to (2.2), to enumerate $\mathcal {I}^{(i)}$ is as the same as
to enumerate $\Upsilon^{(i)}$.

Now we recall some notations and definitions (e.g. c.f.\cite{FSS,
H}), which will be used hereinafter.

The height of an element $\alpha=\sum_{\tau\in\Pi}k_\tau \tau$ is
defined by
\begin{equation}
\mbox{ht}(\alpha):=\sum_{\tau\in\Pi}k_\tau.
\end{equation}
There is a unique highest root in $\Phi$, which is denoted by
$\theta$.

There is a positive definite symmetric bilinear form $(\cdot\
,\cdot)$ on the root lattice
$\Lambda_r:=\mbox{Span}_{\mathbb{Z}}\Pi$ associated with the Killing
form on $\mathfrak{g}$. It is well known that for any two
nonproportional roots $\alpha, \beta$, if $(\alpha,\beta)>0$ (resp.
$<0$), then $\alpha-\beta$ (resp. $\alpha+\beta$) is a root.

We can define a partial ordering ``$\succ$'' on the root lattice
$\Lambda_r$ by
\begin{equation}
\alpha\succ\beta\quad\mbox{iff}\quad \mbox{$\alpha-\beta$ is a sum
of simple roots or $\alpha=\beta$}.
\end{equation}

A subset $\Psi\subset\Phi_+$ is called an increasing subset if for
any $\alpha,\beta\in\Phi_+$, the conditions $\alpha\in \Psi$ and
$\beta\succ\alpha$ imply $\beta\in \Psi$.

The following lemma will show that each $\Psi\in\Upsilon$ has to be
an increasing subset.

\begin{lemma}
For any $\alpha,\beta\in \Phi_+$ with $\beta-\alpha\succ0$, there
exist $\tau_1,\tau_2,\ldots,\tau_k\in\Pi$ such that
$\alpha+\tau_1,\alpha+\tau_1+\tau_2,\ldots,\alpha+\tau_1+\cdots+\tau_k=\beta\in\Phi_+$.
\end{lemma}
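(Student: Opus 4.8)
\smallskip
\noindent\emph{Proof sketch.}
The plan is to argue by induction on $n:=\mathrm{ht}(\beta)-\mathrm{ht}(\alpha)=\mathrm{ht}(\beta-\alpha)$, which is a nonnegative integer since $\beta-\alpha\succ0$. If $n=0$ then $\alpha=\beta$ and the statement is trivial (take $k=0$). For the inductive step set $\gamma:=\beta-\alpha=\sum_{\tau\in\Pi}c_\tau\tau$ with all $c_\tau\ge0$ and $\gamma\ne0$. Since the bilinear form is positive definite, $0<(\gamma,\gamma)=\sum_{\tau\in\Pi}c_\tau(\tau,\gamma)$, so we may pick a simple root $\tau$ with $c_\tau>0$ and $(\tau,\gamma)>0$, i.e. $(\tau,\beta)>(\tau,\alpha)$. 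The idea is to peel off this $\tau$ --- from the bottom or from the top, depending on signs --- so that the height of the difference drops by one, and then apply the inductive hypothesis.

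If $(\tau,\alpha)<0$, then $\alpha$ and $\tau$ are non-proportional roots (note $\alpha\in\Phi_+$ rules out $\alpha=\pm\tau$ here), so $\alpha+\tau\in\Phi$ by the root-string fact recalled above; since $\alpha+\tau$ has nonnegative coordinates and is nonzero, $\alpha+\tau\in\Phi_+$, and $\beta-(\alpha+\tau)=\gamma-\tau\succ0$ has height $n-1$. Applying the inductive hypothesis to the pair $(\alpha+\tau,\beta)$ and prepending $\tau_1:=\tau$ gives the required chain. If instead $(\tau,\alpha)\ge0$, then $(\tau,\beta)>(\tau,\alpha)\ge0$, so $(\tau,\beta)>0$; here $\beta-\tau=\alpha+(\gamma-\tau)$ is a nonzero vector with nonnegative coordinates, so in particular $\beta\ne\tau$, hence $\beta$ and $\tau$ are non-proportional roots and $\beta-\tau\in\Phi$, forcing $\beta-\tau\in\Phi_+$. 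Now $(\beta-\tau)-\alpha=\gamma-\tau\succ0$ again has height $n-1$, so applying the inductive hypothesis to $(\alpha,\beta-\tau)$ and appending $\tau_k:=\tau$ --- so that the final partial sum is $(\beta-\tau)+\tau=\beta$ --- completes the induction.

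The only genuine input is the choice of $\tau$ via positive-definiteness of $(\cdot\,,\cdot)$, together with the recalled implications $(\tau,\alpha)<0\Rightarrow\alpha+\tau\in\Phi$ and $(\tau,\beta)>0\Rightarrow\beta-\tau\in\Phi$ for non-proportional roots. Everything else --- that $\alpha+\tau$, resp. $\beta-\tau$, actually lies in $\Phi_+$, and that the relevant height drops by one --- is routine coordinate bookkeeping. The point deserving a little care is the second case, where one must check that the intermediate root $\beta-\tau$ is positive; this is precisely what the decomposition $\beta-\tau=\alpha+(\gamma-\tau)$ provides, since $\alpha$ is a nonzero vector with nonnegative coordinates and $\gamma-\tau$ has nonnegative coordinates.
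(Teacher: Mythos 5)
Your proof is correct and follows essentially the same route as the paper's: induction on $\mathrm{ht}(\beta-\alpha)$, using positivity of the form to locate a simple root $\tau$ in the support of $\beta-\alpha$ with $(\tau,\alpha)<0$ or $(\tau,\beta)>0$, and then passing to $\alpha+\tau$ or $\beta-\tau$. The only differences are cosmetic (base case at height $0$ rather than $1$, and expanding $(\gamma,\gamma)>0$ instead of $(\beta-\alpha,\beta)>(\beta-\alpha,\alpha)$), plus some extra care about why $\beta-\tau$ is positive, which the paper leaves implicit.
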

\begin{proof}
We will prove it by induction on $\mbox{ht}(\beta-\alpha)$. If
$\mbox{ht}(\beta-\alpha)=1$, the statement holds trivially.

Suppose the lemma holds for $\mbox{ht}(\beta-\alpha)=k-1,(k>1)$. Now
assume $\beta-\alpha=\sum_{\tau\in\Pi}k_{\tau}\tau$ with
$\sum_{\tau\in\Pi}k_{\tau}=k$. Since
\begin{equation*}
(\beta-\alpha,\beta)=(\beta-\alpha,\beta-\alpha)+(\beta-\alpha,\alpha)>(\beta-\alpha,\alpha),
\end{equation*}
we have either $(\beta-\alpha,\beta)>0$ or
$(\beta-\alpha,\alpha)<0$. Hence there must be a $\tau\in\Pi$ with
$k_\tau>0$ such that either $(\tau,\beta)>0$ or $(\tau,\alpha)<0$.
Then either $\beta-\tau$ or $\alpha+\tau$ is a positive root. By
inductive assumption on $\alpha$ and $\beta-\tau$ or on
$\alpha+\tau$ and $\beta$, we get that the statement holds for
$\mbox{ht}(\beta-\alpha)=k$.
\end{proof}

\begin{lemma}
For any $\alpha,\beta\in\Phi_+$, if there is a $\gamma\in\Phi_+$
with $\alpha+\beta\prec\gamma$, then there exist
$\alpha',\beta'\in\Phi_+$ such that
$\alpha'\succ\alpha,\beta'\succ\beta$ and $\alpha'+\beta'\in\Phi_+$.
\end{lemma}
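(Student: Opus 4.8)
The plan is to prove this by induction on $\mathrm{ht}(\gamma - \alpha - \beta) \geq 0$. In the base case $\gamma = \alpha + \beta$, we simply take $\alpha' = \alpha$ and $\beta' = \beta$, and there is nothing to prove. For the inductive step, assume $\gamma \succ \alpha + \beta$ strictly, so that $\gamma - \alpha - \beta$ is a nonempty sum of simple roots. By Lemma 2.1 applied to the pair $\alpha + \beta$ (which need not itself be a root, but the height argument in Lemma 2.1 only used the bilinear form on the root lattice — I should double-check this, see below) $\ldots$ actually, more carefully: I would apply the mechanism inside the proof of Lemma 2.1. Since
\begin{equation*}
(\gamma - \alpha - \beta, \gamma) = (\gamma - \alpha - \beta, \gamma - \alpha - \beta) + (\gamma - \alpha - \beta, \alpha + \beta) > (\gamma - \alpha - \beta, \alpha + \beta),
\end{equation*}
we get either $(\gamma - \alpha - \beta, \gamma) > 0$ or $(\gamma - \alpha - \beta, \alpha + \beta) < 0$. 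In the first case there is a simple root $\tau$ with positive coefficient in $\gamma - \alpha - \beta$ and $(\tau, \gamma) > 0$, so $\gamma - \tau \in \Phi_+$, and we apply induction to $\alpha, \beta, \gamma - \tau$. In the second case there is a simple root $\tau$ with positive coefficient in $\gamma - \alpha - \beta$ and $(\tau, \alpha + \beta) < 0$, hence $(\tau, \alpha) < 0$ or $(\tau, \beta) < 0$; say $(\tau, \alpha) < 0$, so $\alpha + \tau \in \Phi_+$ (using that $\tau$ is not proportional to $\alpha$ unless $\alpha = \tau$, a case one checks directly), and we apply induction to $\alpha + \tau, \beta, \gamma$.

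The key steps, in order, are: (1) set up the induction on $\mathrm{ht}(\gamma - \alpha - \beta)$; (2) dispatch the base case trivially; (3) run the bilinear-form dichotomy above to peel off a simple root either from $\gamma$ (shrinking the gap) or onto $\alpha$ or $\beta$ (also shrinking the gap, since $\gamma - (\alpha + \tau) - \beta$ has smaller height); (4) invoke the inductive hypothesis and observe that the returned pair $\alpha', \beta'$ satisfies $\alpha' \succ \alpha + \tau \succ \alpha$ (transitivity of $\succ$) in the second case, and $\alpha' \succ \alpha$, $\beta' \succ \beta$ directly in the first. Transitivity of $\succ$ on the root lattice is immediate from the definition, since a sum of two sums of simple roots is again a sum of simple roots.

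The main obstacle is the borderline situation where the simple root $\tau$ one wants to add to $\alpha$ is proportional to $\alpha$ itself — i.e. $\alpha = \tau$ — so that the standard ``$(\alpha, \beta) < 0 \Rightarrow \alpha + \beta \in \Phi$'' fact does not directly apply. When $\alpha = \tau$ and $(\tau, \alpha) = (\tau,\tau) < 0$ is impossible since the form is positive definite; the genuine worry is rather that $\tau$ has positive coefficient in $\gamma - \alpha - \beta$ while $(\tau,\alpha) < 0$ forces $2\tau$-type issues only in non-simply-laced cases, which one rules out because $2\tau \notin \Phi$. I expect this case analysis to be short but to require the explicit remark that $\tau \ne \alpha$ whenever $(\tau,\alpha) < 0$ (as $(\tau,\tau) > 0$), so $\alpha + \tau$ is genuinely a root. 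A secondary point to be careful about: in the second case I must ensure that after replacing $\alpha$ by $\alpha + \tau$, the hypothesis $\alpha + \tau + \beta \prec \gamma$ still holds, which it does because $\gamma - (\alpha + \tau + \beta) = (\gamma - \alpha - \beta) - \tau$ is still a (shorter) sum of simple roots, $\tau$ having had positive coefficient. Once these points are nailed down, the induction closes.
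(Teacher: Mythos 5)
Your proposal is correct and follows essentially the same argument as the paper: induction on $\mathrm{ht}(\gamma-\alpha-\beta)$, the same bilinear-form trichotomy to find a simple root $\tau$ that can be subtracted from $\gamma$ or added to $\alpha$ or $\beta$, and then the inductive hypothesis. The extra care you take about $\tau$ not being proportional to $\alpha$ and about transitivity of $\succ$ is sound but only fills in details the paper leaves implicit.
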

\begin{proof}
We will also use induction on the height of $\gamma-\alpha-\beta$ to
prove this lemma.

If $\mbox{ht}(\gamma-\alpha-\beta)=0$ (i.e. $\gamma=\alpha+\beta$),
then we can take $\alpha'=\alpha$ and $\beta'=\beta$.

Suppose that the lemma holds for
$\mbox{ht}(\gamma-\alpha-\beta)=k-1,(k>0)$. Now assume
$\gamma-\alpha-\beta=\sum_{\tau\in\Pi}k_{\tau}\tau$ with
$\sum_{\tau\in\Pi}k_{\tau}=k$. Since
\begin{eqnarray*}
(\gamma-\alpha-\beta,
\gamma)&=&(\gamma-\alpha-\beta,\alpha)+(\gamma-\alpha-\beta,\beta)+(\gamma-\alpha-\beta,\gamma-\alpha-\beta)\\
&>&(\gamma-\alpha-\beta,\alpha)+(\gamma-\alpha-\beta,\beta),
\end{eqnarray*}
we have $(\gamma-\alpha-\beta,\alpha)<0$ or
$(\gamma-\alpha-\beta,\beta)<0$ or $(\gamma-\alpha-\beta,\gamma)>0$.
Hence there must be a $\tau\in\Pi$ with $k_\tau>0$ such that
$(\tau,\alpha)<0$ or $(\tau,\beta)<0$ or $(\tau,\gamma)>0$. If
$(\tau,\alpha)<0$ (resp. $(\tau,\beta)<0$ and $(\tau,\gamma)>0$),
then $\alpha_0=\alpha+\tau$ (resp. $\beta_0=\beta+\tau$ and
$\gamma_0=\gamma-\tau$) is a positive root. Take $\alpha_0$ (resp.
$\beta_0$ and $\gamma_0$) instead of $\alpha$ (resp. $\beta$ and
$\gamma$) and use the inductive assumption, then we get that the
statement holds for $\mbox{ht}(\gamma-\alpha-\beta)=k-1$.
\end{proof}

The above two lemmas (specially, take $\theta$ to be $\gamma$ in
Lemma 2.2) show immediately that
\begin{proposition}Take any $\Psi\in\Upsilon$.
(1) If $\alpha\in\Psi$ and $\alpha\prec\alpha'\in\Phi_+$, then
$\alpha'\in\Psi$. (2) If $\alpha,\beta\in\Phi_+$ and
$\alpha+\beta\prec\theta$, then it is impossible that both $\alpha$
and $\beta$ are in $\Psi$. In particular, if $2\alpha\prec\theta$,
then $\alpha\not\in\Psi$. \hfill$\Box$
\end{proposition}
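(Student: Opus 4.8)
The plan is to derive both assertions directly from Lemmas 2.1 and 2.2, using the two defining conditions of $\Upsilon$: that $\Psi\dotplus\Phi_+\subset\Psi$ and that $\Psi\dotplus\Psi=\emptyset$.

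For part (1), I would first dispose of the trivial case $\alpha=\alpha'$. When $\alpha\prec\alpha'$ with $\alpha\neq\alpha'$ we have $\alpha'-\alpha\succ0$, so Lemma 2.1 supplies simple roots $\tau_1,\ldots,\tau_k\in\Pi$ such that every partial sum $\alpha+\tau_1+\cdots+\tau_j$ (for $1\le j\le k$) lies in $\Phi_+$, with the last one equal to $\alpha'$. Since $\Pi\subset\Phi_+$, I would then induct on $j$: assuming $\alpha+\tau_1+\cdots+\tau_{j-1}\in\Psi$, adding the positive root $\tau_j$ and using that the sum is again a positive root (exactly what Lemma 2.1 guarantees) shows $\alpha+\tau_1+\cdots+\tau_j\in(\Psi+\Phi_+)\cap\Phi_+=\Psi\dotplus\Phi_+\subset\Psi$. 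Starting from $\alpha\in\Psi$, after $k$ steps this gives $\alpha'\in\Psi$.

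For part (2), suppose toward a contradiction that $\alpha,\beta\in\Phi_+$ satisfy $\alpha+\beta\prec\theta$ and that both $\alpha$ and $\beta$ lie in $\Psi$. Applying Lemma 2.2 with $\gamma=\theta$ produces $\alpha',\beta'\in\Phi_+$ with $\alpha'\succ\alpha$, $\beta'\succ\beta$ and $\alpha'+\beta'\in\Phi_+$. By part (1), $\alpha'\in\Psi$ and $\beta'\in\Psi$, hence $\alpha'+\beta'\in(\Psi+\Psi)\cap\Phi_+=\Psi\dotplus\Psi$, contradicting $\Psi\dotplus\Psi=\emptyset$. The ``in particular'' clause is just the case $\alpha=\beta$: if $2\alpha\prec\theta$ and $\alpha\in\Psi$, then the same application of Lemma 2.2 (with both arguments equal to $\alpha$) together with part (1) would again force $\Psi\dotplus\Psi$ to be nonempty, so $\alpha\notin\Psi$.

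I do not expect a genuine obstacle here: the proposition is essentially a formal consequence of the two lemmas, as the excerpt itself hints. The only points needing a little care are the edge case $\alpha=\alpha'$ in part (1), the observation that simple roots are themselves positive roots (so that each one-step extension is legitimately a ``$\Psi$-element plus a positive root''), and making sure the chain from Lemma 2.1 is traversed so that every intermediate term remains in $\Phi_+$ — which is precisely the content of that lemma.
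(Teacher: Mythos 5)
Your proposal is correct and follows exactly the route the paper intends: the paper states that Proposition~2.3 follows ``immediately'' from Lemmas~2.1 and~2.2 (taking $\gamma=\theta$ in Lemma~2.2), and your argument simply fills in those details, using $\Psi\dotplus\Phi_+\subset\Psi$ for part (1) and $\Psi\dotplus\Psi=\emptyset$ together with part (1) for part (2). The edge cases you flag ($\alpha=\alpha'$, simple roots being positive, and the $\alpha=\beta$ specialization) are handled correctly.
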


The above proposition can help us predigest the enumeration of
abelian ideals. In fact, now we have a much more intuitional
description of $\Upsilon$ than in (2.1). That is,
\begin{equation}
\Upsilon=\{\Psi\in \Phi_+\mid\mbox{$\Psi$ is an increasing subset of
$\Phi_+$ and for any $\alpha,\beta\in\Psi$,
$\alpha+\beta\not\prec\theta$}\}.
\end{equation}
Particularly, we need not to consider all roots but only to consider
the roots in a subset $\Omega\subset\Phi_+$, which is defined by
\begin{equation}\Omega:=
\{\alpha\in\Phi_+\mid 2\alpha\not\prec\theta\}.
\end{equation}

Sometimes the subscript ``$_r$'' is used for a notation to emphasize
the rank $r$. For example, we use $\Omega_r$, $\Upsilon_r$, etc.

Define the minimal roots $\Psi_{\min}$ of any $\Psi\subset\Phi_+$ as
follows: $\alpha\in\Psi$ belongs to $\Psi_{\min}$ if and only if for
all $\beta\in\Phi_+$ with $\beta\prec\alpha$ and $\beta\neq\alpha$,
then $\beta\not\in\Psi$. Since each $\Psi\in\Upsilon$ is an
increasing subset, it can be unique determined by $\Psi_{\min}$.
Note that the elements of $\Psi_{\min}$ are pairwise incomparable
elements of $\Phi_+$. Inversely, every set of pairwise incomparable
elements of $\Phi_+$ with the sum of any two elements being not less
than or equal to $\theta$ is a $\Psi_{\min}$ for a unique
$\Psi\in\Upsilon$. We call such a set an incomparable admissible
subset of $\Phi_+$. Now there comes a bijection induced from (2.1)
as follows:
\begin{eqnarray}
 \mathcal {I}\leftrightarrow \{\mbox{ incomparable admissible subsets of $\Phi_+$}\},\quad
 I_{\Psi}\mapsto\Psi_{\min}.
\end{eqnarray}
This correspondence will convenience us to enumerate the abelian
ideals.

\section{Classical Type $A_r$}
In this section, we will determine the number of abelian ideals with
dimension $m$ in a Borel subalgebra of special linear Lie algebras
$\mathfrak{sl}_{r+1}(r\geq 1)$. Precisely, the number of abelian
ideals with dimension $m$ can be expressed by a restricted partition
function $P_{i,j}(m)$ which will be defined in (3.7).

For type $A_r$, we have
\begin{equation}
\Pi=\{\alpha_1=\epsilon_1-\epsilon_2,\alpha_2=\epsilon_2-\epsilon_3,\ldots,\alpha_r=\epsilon_r-\epsilon_{r+1}\}
\end{equation}
and
\begin{equation}
\Phi_+=\{\epsilon_i-\epsilon_j=\alpha_i+\alpha_{i+1}+\cdots+\alpha_{j-1}\mid
0\leq i<j\leq r+1\}.
\end{equation}

The highest root is
\begin{equation}
\theta=\epsilon_1-\epsilon_{r+1}=\alpha_1+\alpha_2+\cdots+\alpha_r.
\end{equation}

So any incomparable admissible subset has to be with the form
\begin{eqnarray}
&&\{\alpha_{i_1}+\alpha_{i_1+1}+\cdots+\alpha_{j_1},
\alpha_{i_2}+\alpha_{i_2+1}+\cdots+\alpha_{j_2},\ldots,
\alpha_{i_k}+\alpha_{i_k+1}+\cdots+\alpha_{j_k}\\
&&\quad\quad\quad\quad\quad\quad\quad\quad\mid
k\in \mathbb{Z}_{\geq0}, 1\leq i_1<i_2<\cdots<i_k\leq
j_1<j_2<\cdots<j_k\leq r\}.\nonumber
\end{eqnarray}

We shall compute the dimension of the abelian ideal determined by
$\Psi_{\min}$ with form (3.4). In fact, the set $\Psi\in\Upsilon$
determined by $\Psi_{\min}$ is
\begin{eqnarray}
\Psi=\{\alpha_1+\cdots+\alpha_{j_1},\alpha_2+\cdots+\alpha_{j_1},\ldots,\alpha_{i_1}+\cdots+\alpha_{j_1},\\
\alpha_1+\cdots+\alpha_{j_1+1},\alpha_2+\cdots+\alpha_{j_1+1},\ldots,\alpha_{i_1}+\cdots+\alpha_{j_1+1},\nonumber\\
\cdots\cdots\cdots\quad\quad\quad\quad\quad\quad\quad\quad\quad\quad\nonumber\\
\alpha_1+\cdots+\alpha_{j_2-1},\alpha_2+\cdots+\alpha_{j_2-1},\ldots,\alpha_{i_1}+\cdots+\alpha_{j_2-1},\nonumber\\
\alpha_1+\cdots+\alpha_{j_2},\alpha_2+\cdots+\alpha_{j_2},\ldots,\alpha_{i_2}+\cdots+\alpha_{j_2},\nonumber\\
\cdots\cdots\cdots\quad\quad\quad\quad\quad\quad\quad\quad\quad\quad\nonumber\\
\alpha_1+\cdots+\alpha_{j_3-1},\alpha_2+\cdots+\alpha_{j_3-1},\ldots,\alpha_{i_2}+\cdots+\alpha_{j_3-1},\nonumber\\
\cdots\cdots\cdots\cdots\cdots\cdots\cdots\quad\quad\quad\quad\quad\quad\quad\nonumber\\
\alpha_1+\cdots+\alpha_{j_k},\alpha_2+\cdots+\alpha_{j_k},\ldots,\alpha_{i_k}+\cdots+\alpha_{j_k},\nonumber\\
\cdots\cdots\cdots\quad\quad\quad\quad\quad\quad\quad\quad\quad\quad\nonumber\\
\alpha_1+\cdots+\alpha_{r},\alpha_2+\cdots+\alpha_{r},\ldots,\alpha_{i_k}+\cdots+\alpha_{r}\}.\nonumber
\end{eqnarray}
Hence
\begin{equation}
\dim I_{\Psi}=|\Psi|=\overbrace{(i_1+\cdots+i_1)}^{j_2-j_1}
+\overbrace{(i_2+\cdots+i_2)}^{j_3-j_2}
+\cdots+\overbrace{(i_k+\cdots+i_k)}^{r+1-j_k}.
\end{equation}
Therefore the number of abelian ideals with dimension $m$ is equal
to the number of different ways to give a partition
$(\lambda_1\geq\lambda_2\geq\cdots\geq\lambda_t>0)\vdash m$ with
$t+\lambda_1\leq r+1$.

Given $i,j\in \mathbb{Z}_{>0}$, define a restricted partition
function
\begin{equation}
P_{i,j}(m):=\sharp\mbox{ of
}\{(\lambda_1\geq\lambda_2\geq\cdots\geq\lambda_i>0)\vdash m\mid
\lambda_1\leq j\}, \quad(m\in\mathbb{Z}_{>0}).
\end{equation}
We have
\begin{proposition}
The number of abelian ideals with dimension $m\not=0$ in a Borel
subalgebra of $\mathfrak{sl}_{r+1}$ is equal to
$\sum_{i=1}^{r}P_{i,r+1-i}(m)$. Besides, there is a trivial abelian
ideal $\emptyset$, whose dimension is $0$. \hfill$\Box$
\end{proposition}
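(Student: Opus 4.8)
The plan is to turn the combinatorial description already in hand into a counting argument by reading off a partition from each incomparable admissible subset. By the bijection (2.10), abelian ideals of dimension $m$ correspond to incomparable admissible subsets $\Psi_{\min}$ of $\Phi_+$, and for type $A_r$ every such subset has the form (3.4): a collection of roots $\alpha_{i_s}+\cdots+\alpha_{j_s}$ with $1\le i_1<i_2<\cdots<i_k\le j_1<j_2<\cdots<j_k\le r$. First I would verify (3.4) itself, i.e. that the conditions ``pairwise incomparable'' and ``no two sum to something $\preceq\theta$'' translate exactly into the stated inequalities on the indices; incomparability forces the $i_s$ strictly increasing and the $j_s$ strictly increasing (two roots $\epsilon_{i}-\epsilon_{j}$, $\epsilon_{i'}-\epsilon_{j'}$ are comparable iff $i\ge i'$ and $j\le j'$ or vice versa), and since $\theta=\epsilon_1-\epsilon_{r+1}$ the admissibility condition $\alpha+\beta\not\prec\theta$ becomes automatic once the indices are distinct and within range, leaving $i_k\le j_1$ as the only genuine extra constraint beyond $i_k\le r$, $j_1\ge 1$.

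Next I would compute the dimension. The increasing subset $\Psi$ generated by $\Psi_{\min}$ of the form (3.4) is displayed in (3.5): for each ``block index'' $s$, the roots $\alpha_a+\cdots+\alpha_b$ with $1\le a\le i_s$ and $j_{s-1}<b<j_{s+1}$ appear (with the conventions $j_0$ meaning the start and $j_{k+1}=r+1$), and a short bookkeeping shows that the $j_s$-th ``column'' contributes $i_s$ roots for each of the $j_{s+1}-j_s$ values of the upper index in its range. Summing gives $|\Psi|=\sum_{s=1}^{k} i_s\,(j_{s+1}-j_s)$ with $j_{k+1}:=r+1$, which is exactly (3.6). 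Now I would introduce the change of variables $\lambda_s := i_{k+1-s}$ and note that the multiplicities $j_{s+1}-j_s$ are positive integers summing to $r+1-j_1\le r$; thus $\dim I_\Psi$ is a sum $\sum \lambda_s \cdot(\text{multiplicity})$ which is precisely the value $m$ of a partition of $m$ whose distinct parts are the $i_s$ (in decreasing order $\lambda_1\ge\lambda_2\ge\cdots$, repeated according to the multiplicities) — more precisely, a partition $\lambda\vdash m$ with largest part $\lambda_1=i_k$ and number of parts $t=\sum_s(j_{s+1}-j_s)=r+1-j_1$, so that $\lambda_1+t = i_k+r+1-j_1\le r+1$ using $i_k\le j_1$. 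Conversely any partition $\lambda\vdash m$ with $\lambda_1+t\le r+1$ arises: group equal parts to recover the distinct values $i_1<\cdots<i_k$ and their multiplicities, then set $j_1:=r+1-t$ and let $j_{s+1}:=j_s+(\text{multiplicity of }i_s)$, which forces $j_k<\cdots<j_1$ appropriately and $j_1\ge i_k$ because $i_k=\lambda_1\le r+1-t=j_1$; one checks $i_1\ge 1$ and $j_k\le r$ hold automatically, recovering a valid subset of the form (3.4). This two-way correspondence is a bijection, so the number of abelian ideals of dimension $m\ne 0$ equals the number of partitions $\lambda\vdash m$ with $\lambda_1+(\text{number of parts})\le r+1$.

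Finally I would repackage this count in terms of $P_{i,j}$. Classifying partitions $\lambda\vdash m$ with $\lambda_1+t\le r+1$ by their number of parts $t=i$, the remaining condition is $\lambda_1\le r+1-i$ with exactly $i$ parts, which by definition (3.7) is counted by $P_{i,\,r+1-i}(m)$; summing over $i=1,\dots,r$ (the range forced by $1\le t$ and $1\le\lambda_1\le r+1-t$) gives $\sum_{i=1}^{r}P_{i,r+1-i}(m)$. The $m=0$ case corresponds to $\Psi_{\min}=\emptyset$, the trivial abelian ideal. The main obstacle is the middle step: pinning down (3.5)–(3.6) and then showing that the index data $(i_1<\cdots<i_k;\ j_1<\cdots<j_k)$ is in genuine bijection with partitions constrained by $\lambda_1+t\le r+1$ — in particular checking that the inequality $i_k\le j_1$ is exactly what encodes $\lambda_1+t\le r+1$ and that no admissible configurations are lost or double-counted. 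Once that bijection is clean, the reformulation via $P_{i,j}$ is purely notational.
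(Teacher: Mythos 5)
Your proposal is correct and follows essentially the same route as the paper: the normal form (3.4) for incomparable admissible subsets, the dimension formula (3.6), and the translation into partitions $\lambda\vdash m$ with $\lambda_1+t\le r+1$, classified by the number of parts $t=i$ to yield $\sum_{i=1}^{r}P_{i,r+1-i}(m)$. You in fact supply more detail than the paper on the two-way correspondence with partitions (in particular that $i_k\le j_1$ is exactly the constraint $\lambda_1+t\le r+1$), which the paper asserts without elaboration; only minor wording slips (the range of the upper index $b$ in each block should be $j_s\le b<j_{s+1}$) would need tidying.
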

\begin{remark}
The restricted partition function $P_{i,j}(m)$ can be expressed by a
common combinatorial technique. It is equal to the coefficient of
$s^it^m$ in
\begin{eqnarray}
&&(1+st+(st)^2+\cdots)(1+st^2+(st^2)^2+\cdots)\cdots(1+st^j+(st^j)^2+\cdots)\\
&&=\frac{1}{(1-st)(1-st^2)\cdots(1-st^j)}.\nonumber
\end{eqnarray}
\end{remark}
\begin{remark} To take $(i_1,i_2,\ldots,i_k; j_1,j_2,\ldots,j_k)$ in (3.4) is as
the same as to take subsets in $\{1,2,\ldots,r\}$. Hence by (2.9),
we get Peterson's $2^r$ theorem for type $A$. Now Peterson's $2^r$
theorem and Proposition 3.1 imply a combinatorial identity:
\begin{equation}
1+\sum_{m=1}^\infty\sum_{i=1}^{r}P_{i,r+1-i}(m)=2^r.
\end{equation}
\end{remark}

\section{Classical Types $B_r$, $C_r$ and $D r$}
In this section, we will determine the number of abelian ideals with
given dimension in a Borel subalgebra of symplectic Lie algebras
$\mathfrak{sp}_{2r}(r\geq2)$ and orthogonal Lie algebras
$\mathfrak{o}_{2r}(r\geq3)$, $\mathfrak{o}_{2r+1}(r\geq4)$.
Precisely, the number of abelian ideals with given dimension can be
expressed as the coefficient of a generating polynomial.

The simplest case among type $B$, $C$ and $D$ is the second one. So
we will consider type $C$ first.

\subsection{Type $C_r$}
\

The result in this subsection is a corollary in \cite{L}, where a
connection between the abelian ideals in a Borel subalgebra and the
cohomology of a maximal nilpotent subalgebra was constructed. Here
we give a much more direct method to obtain the result again for
self-containment.

For type $C_r$, we have
\begin{equation}
\Pi=\{\alpha_1=\epsilon_1-\epsilon_2,\alpha_2=\epsilon_2-\epsilon_3,\ldots,
\alpha_{r-1}=\epsilon_{r-1}-\epsilon_{r},\alpha_r=2\epsilon_r\}
\end{equation}
and
\begin{eqnarray}
&&\Phi_+=\{\epsilon_i-\epsilon_j=\alpha_i+\cdots+\alpha_{j-1},
2\epsilon_i=2\alpha_i+\cdots+2\alpha_{r-1}+\alpha_r,2\epsilon_r=\alpha_r,\\
&&\quad\quad\quad
\epsilon_i+\epsilon_j=\alpha_i+\cdots+\alpha_{j-1}+2\alpha_j+\cdots+2\alpha_{r-1}+\alpha_r
\mid 1\leq i<j\leq r \}.\nonumber
\end{eqnarray}
The highest root is
\begin{equation}
\theta=2\epsilon_1=2\alpha_1+\cdots+2\alpha_{r-1}+\alpha_r.
\end{equation}

Recall the definition of $\Omega$ in (2.8). It is obvious that for
type $C$,
\begin{equation}
\Omega=\{\epsilon_i+\epsilon_j\mid1\leq i\leq j\leq r\}.
\end{equation}
Observe that
\begin{equation}
\epsilon_{i_1}+\epsilon_{j_1}\prec\epsilon_{i_1}+\epsilon_{j_1}
\quad\mbox{if and only if}\quad i_1\leq i_2 \mbox{ and } j_1\leq
j_2,
\end{equation}
where $i_1\leq j_1$ and $i_2\leq j_2$.

Moreover, for any $\alpha,\beta\in\Omega$, we have
$\alpha+\beta\not\prec\theta$. So by (2.7), we have
\begin{equation}
\Upsilon=\{\mbox{ increasing subset of }\Omega\}.
\end{equation}

Recall the subscript ``$_r$'' (especially $\Omega_r$ and
$\Upsilon_r$) explained below (2.8). Following is the Hasse diagram
of
$\Omega_r$ under the partial ordering ``$\prec$''.\\
(Note: In this paper, we always draw the maximal elements in the
bottom of the Hasse diagram for convenience. It is opposite to some
other literatures, where the maximal elements are drawn in the top
of a Hasse diagram.)

\setlength{\unitlength}{1pt}
\begin{picture}(0,0)
\put(5,-5){$2\epsilon_r$} \put(12,-8){\line(2,-1){10}}
\put(15,-18){$\epsilon_{r-1}+\epsilon_r$}
\put(12,-26){\line(2,1){10}} \put(49,-22){\line(2,-1){10}}
\put(5,-32){$2\epsilon_{r-1}$}
\put(44,-32){$\epsilon_{r-2}+\epsilon_{r}$}
\put(12,-35){\line(2,-1){10}} \put(49,-39){\line(2,1){10}}
\put(80,-35){\line(2,-1){10}}
\put(15,-45){$\epsilon_{r-2}+\epsilon_{r-1}$}
\put(78,-45){$\epsilon_{r-3}+\epsilon_{r}$}
\put(12,-53){\line(2,1){10}} \put(49,-49){\line(2,-1){10}}
\put(80,-53){\line(2,1){10}} \put(110,-49){\line(2,-1){10}}
\put(5,-59){$2\epsilon_{r-2}$}
\put(44,-59){$\epsilon_{r-3}+\epsilon_{r-1}$}
\put(107,-59){$\epsilon_{r-4}+\epsilon_{r}$}
\put(12,-62){\line(2,-1){10}} \put(49,-66){\line(2,1){10}}
\put(80,-62){\line(2,-1){10}} \put(110,-66){\line(2,1){10}}
\put(140,-62){\line(2,-1){10}}
\put(15,-72){$\epsilon_{r-3}+\epsilon_{r-2}$}
\put(78,-72){$\epsilon_{r-4}+\epsilon_{r-1}$}
\put(140,-72){$\epsilon_{r-5}+\epsilon_{r}$}
\put(12,-80){\line(2,1){10}} \put(49,-76){\line(2,-1){10}}
\put(80,-80){\line(2,1){10}} \put(110,-76){\line(2,-1){10}}
\put(140,-80){\line(2,1){10}} \put(170,-76){\line(2,-1){10}}
\put(5,-86){$\cdots$} \put(30,-86){$\cdots$} \put(66,-86){$\cdots$}
\put(125,-86){$\cdots$} \put(185,-86){$\cdots$}
\put(12,-88){\line(2,-1){10}} \put(49,-92){\line(2,1){10}}
\put(80,-88){\line(2,-1){10}} \put(110,-92){\line(2,1){10}}
\put(140,-88){\line(2,-1){10}} \put(170,-92){\line(2,1){10}}
\put(198,-88){\line(2,-1){10}} \put(5,-86){$\cdots$}
\put(30,-98){$\cdots$} \put(66,-98){$\cdots$} \put(95,-98){$\cdots$}
\put(125,-98){$\cdots$} \put(140,-98){$\epsilon_{2}+\epsilon_{r-1}$}
\put(200,-98){$\epsilon_{1}+\epsilon_{r}$}
\put(12,-106){\line(2,1){10}} \put(49,-102){\line(2,-1){10}}
\put(80,-106){\line(2,1){10}} \put(110,-102){\line(2,-1){10}}
\put(140,-106){\line(2,1){10}} \put(170,-102){\line(2,-1){10}}
\put(200,-106){\line(2,1){10}} \put(5,-86){$\cdots$}
\put(5,-112){$\cdots$} \put(30,-112){$\cdots$}
\put(66,-112){$\cdots$} \put(125,-112){$\cdots$}
\put(185,-112){$\cdots$} \put(12,-116){\line(2,-1){10}}
\put(49,-120){\line(2,1){10}} \put(80,-116){\line(2,-1){10}}
\put(110,-120){\line(2,1){10}} \put(140,-116){\line(2,-1){10}}
\put(170,-120){\line(2,1){10}}
\put(20,-126){$\epsilon_{3}+\epsilon_{4}$}
\put(85,-126){$\epsilon_{2}+\epsilon_{5}$}
\put(150,-126){$\epsilon_{1}+\epsilon_{6}$}
\put(12,-134){\line(2,1){10}} \put(49,-130){\line(2,-1){10}}
\put(80,-134){\line(2,1){10}} \put(110,-130){\line(2,-1){10}}
\put(140,-134){\line(2,1){10}} \put(5,-140){$2\epsilon_{3}$}
\put(54,-140){$\epsilon_{2}+\epsilon_{4}$}
\put(117,-140){$\epsilon_{1}+\epsilon_{5}$}
\put(12,-143){\line(2,-1){10}} \put(49,-148){\line(2,1){10}}
\put(80,-143){\line(2,-1){10}} \put(110,-148){\line(2,1){10}}
\put(20,-154){$\epsilon_{2}+\epsilon_{3}$}
\put(85,-154){$\epsilon_{1}+\epsilon_{4}$}
\put(12,-162){\line(2,1){10}} \put(49,-158){\line(2,-1){10}}
\put(80,-162){\line(2,1){10}} \put(5,-168){$2\epsilon_{2}$}
\put(54,-168){$\epsilon_{1}+\epsilon_{3}$}
\put(12,-171){\line(2,-1){10}} \put(49,-176){\line(2,1){10}}
\put(20,-182){$\epsilon_{1}+\epsilon_{2}$}
\put(12,-190){\line(2,1){10}} \put(5,-196){$2\epsilon_{1}$}
\put(90,-196){(Figure 1)}
\end{picture}

\newpage
Set
\begin{equation}
\Upsilon_r(n):= \left\{
\begin{array}{ll}
\{\emptyset\} & (n=0) \\
\{\Psi\in \Upsilon_r \mid \epsilon_1+\epsilon_n\in \Psi, \mbox{ but
} \epsilon_1+\epsilon_{n+1}\not\in \Psi\}&
(n=1,2,\ldots,r-1)\\
\{\Psi\in \Upsilon_r \mid \epsilon_1+\epsilon_r\in \Psi\} & (n=r)
\end{array}
\right.
\end{equation}

It is obvious that
\begin{equation}
\Upsilon_r=\bigcup_{n=0}^r\Upsilon_r(n), \quad\mbox{which is a
disjoint union.}
\end{equation}

Define
\begin{equation}
f_{C_r}(t):=\sum_{i=0}^\infty |\mathcal
{I}^{(i)}|t^i=\sum_{i=0}^\infty |\Upsilon_r^{(i)}|t^i.
\end{equation}

Although we restrict that $r\geq2$ in this subsection, we can also
set
\begin{equation}
\Omega_1=\{2\epsilon_1\} \quad ; \quad
\Upsilon_1^{(i)}=\left\{\begin{array}{cl} \{\emptyset\}, & (i=0)\\
\{2\epsilon_1\}, & (i=1)\\
\emptyset, & (i\geq2).
\end{array}
\right.
\end{equation}
and
\begin{equation}
\Omega_0=\Omega_{-1}=\emptyset \quad;\quad
\Upsilon_{0}^{(i)}=\Upsilon_{-1}^{(i)}=\left\{\begin{array}{cl} \{\emptyset\}, & (i=0)\\
\emptyset, & (i\geq1)
\end{array}
\right.
\end{equation} Hence we can also define
\begin{equation}
f_{C_1}(t)=1+t \quad\mbox{and}\quad
f_{C_{0}}(t)=f_{C_{-1}}(t)\equiv1
\end{equation}
for convenience.

Take any $\Psi\in\Upsilon_r(n)$. Besides the $n$ elements
$\epsilon_1+\epsilon_n, \epsilon_1+\epsilon_{n-1}, \ldots,
2\epsilon_1$, the rest elements in $\Psi$ should constitute an
increasing subset of a partially ordered set which is isomorphic to
$\Omega_{n-1}$. Hence by (4.8) we have
\begin{equation}
f_{C_r}(t)=\sum_{n=0}^rt^nf_{C_{n-1}}(t).
\end{equation}
Furthermore,
\begin{equation}
f_{C_r}(t)=t^rf_{C_{r-1}}(t)+\sum_{n=0}^{r-1}t^nf_{C_{n-1}}(t)=(1+t^r)f_{C_{r-1}}(t).
\end{equation}

Thus by (4.13) and (4.15) we obtain that
\begin{equation}
f_{C_r}(t)=(1+t)(1+t^2)\cdots(1+t^r).
\end{equation}

To summarize:
\begin{proposition}
The number of abelian ideals with dimension $m$ in a Borel
subalgebra of $\mathfrak{sp}_{2r}$ is equal to the coefficient of
$t^m$ in the polynomial (4.15). \hfill$\Box$
\end{proposition}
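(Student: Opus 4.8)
The plan is to encode the dimension distribution in the generating function
$f_{C_r}(t):=\sum_{i\ge 0}\lvert\Upsilon_r^{(i)}\rvert\,t^i$, which by (2.2) and the bijection (2.1) equals $\sum_{i\ge 0}\lvert\mathcal I^{(i)}\rvert\,t^i$, and to prove the recursion $f_{C_r}(t)=(1+t^r)f_{C_{r-1}}(t)$; iterating it gives $f_{C_r}(t)=\prod_{k=1}^r(1+t^k)$, and reading off the coefficient of $t^m$ yields the proposition.

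\emph{Step 1 (reduction to increasing subsets of $\Omega_r$).} From (4.4) one has $\Omega_r=\{\epsilon_i+\epsilon_j\mid 1\le i\le j\le r\}$, and from (4.3) the highest root is $\theta=2\epsilon_1$. I would check directly that the sum of any two elements of $\Omega_r$ has $\epsilon_1$-coefficient at least $2$, whereas $\theta$ has $\epsilon_1$-coefficient $2$ and all other $\epsilon$-coefficients $0$; hence no such sum can be $\preceq\theta$, and (2.7)--(2.8) give that $\Upsilon_r$ is exactly the family of increasing subsets of the poset $(\Omega_r,\succ)$, with no residual admissibility constraint. I would also record (the corrected form of) (4.5): $\epsilon_{i_1}+\epsilon_{j_1}\succ\epsilon_{i_2}+\epsilon_{j_2}$ iff $i_1\le i_2$ and $j_1\le j_2$, so $(\Omega_r,\succ)$ is the set of pairs $\{(i,j)\mid 1\le i\le j\le r\}$ under the componentwise order, with unique maximal element $2\epsilon_1$ (this is Figure 1).

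\emph{Step 2 (the recursion).} Partition $\Upsilon_r$ as $\bigsqcup_{n=0}^r\Upsilon_r(n)$ according to the largest index $n$ with $\epsilon_1+\epsilon_n\in\Psi$ (and $\Psi=\emptyset$ for $n=0$), as in (4.7)--(4.8). The key claim is: for $1\le n\le r$, being an increasing subset forces $\epsilon_1+\epsilon_1,\epsilon_1+\epsilon_2,\dots,\epsilon_1+\epsilon_n\in\Psi$ (each satisfies $\epsilon_1+\epsilon_k\succ\epsilon_1+\epsilon_n$) and forbids every $\epsilon_a+\epsilon_b$ with $b>n$ (each satisfies $\epsilon_1+\epsilon_{n+1}\succ\epsilon_a+\epsilon_b$, and $\epsilon_1+\epsilon_{n+1}\notin\Psi$ when $n<r$, while for $n=r$ there is no such $b$); the remaining elements $\{\epsilon_a+\epsilon_b\mid 2\le a\le b\le n\}$ form a subposet isomorphic to $(\Omega_{n-1},\succ)$ via $(a,b)\mapsto(a-1,b-1)$, and $\Psi\cap\{\epsilon_a+\epsilon_b\mid 2\le a\le b\le n\}$ ranges exactly over the increasing subsets of that subposet; conversely, any such increasing subset adjoined to the forced $n$ roots gives a genuine element of $\Upsilon_r(n)$. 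Since the forced part contributes exactly $n$ to the dimension and the increasing subsets of $\Omega_{n-1}$ are precisely $\Upsilon_{n-1}$ (Step 1 at rank $n-1$), the dimension-generating function of $\Upsilon_r(n)$ is $t^nf_{C_{n-1}}(t)$, whence $f_{C_r}(t)=\sum_{n=0}^rt^nf_{C_{n-1}}(t)$. Splitting off the $n=r$ summand and recognizing $\sum_{n=0}^{r-1}t^nf_{C_{n-1}}(t)=f_{C_{r-1}}(t)$ gives $f_{C_r}(t)=(1+t^r)f_{C_{r-1}}(t)$.

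\emph{Step 3 (base cases and conclusion).} I would nail down the small ranks directly from the definitions of $\Omega_0,\Omega_1$: $\Upsilon_0=\{\emptyset\}$ so $f_{C_0}(t)\equiv 1$, and $\Upsilon_1=\{\emptyset,\{2\epsilon_1\}\}$ so $f_{C_1}(t)=1+t$ (with the convention $f_{C_{-1}}\equiv1$ to cover the $n=0$ term). Induction on $r$ via $f_{C_r}(t)=(1+t^r)f_{C_{r-1}}(t)$ then yields $f_{C_r}(t)=(1+t)(1+t^2)\cdots(1+t^r)$, and by (2.2) the coefficient of $t^m$ counts the abelian ideals of dimension $m$, which is the assertion. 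The one genuinely substantive point is the key claim in Step 2 --- identifying exactly which roots of $\Omega_r$ are forced in or out of $\Psi$ once $\epsilon_1+\epsilon_n$ is the highest occupied root among $\epsilon_1+\epsilon_1,\dots,\epsilon_1+\epsilon_r$, and verifying both directions of the correspondence with increasing subsets of a copy of $\Omega_{n-1}$; this is an elementary but careful check against the componentwise order on $\Omega_r$. Everything else (the telescoping, the base cases, the coefficient extraction) is routine.
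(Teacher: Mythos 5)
Your proof is correct and follows the paper's argument essentially verbatim: the same partition $\Upsilon_r=\bigsqcup_{n=0}^r\Upsilon_r(n)$ by the largest $n$ with $\epsilon_1+\epsilon_n\in\Psi$, the same identification of the residual poset with $\Omega_{n-1}$, the same recursion $f_{C_r}(t)=\sum_{n=0}^r t^nf_{C_{n-1}}(t)=(1+t^r)f_{C_{r-1}}(t)$, and the same base cases. One small repair is needed in Step 1: it is not true that every sum of two elements of $\Omega_r$ has $\epsilon_1$-coefficient at least $2$ (e.g.\ $(\epsilon_2+\epsilon_2)+(\epsilon_2+\epsilon_2)=4\epsilon_2$); the correct justification of $\alpha+\beta\not\preceq\theta$ is that the functional $\sum_i c_i\epsilon_i\mapsto\sum_i c_i$ vanishes on $\alpha_1,\dots,\alpha_{r-1}$ and equals $2$ on $\alpha_r$, hence is nonnegative on every nonnegative integer combination of simple roots, while it equals $4$ on $\alpha+\beta$ for $\alpha,\beta\in\Omega_r$ and only $2$ on $\theta$, so $\theta-\alpha-\beta$ cannot be a sum of simple roots.
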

\begin{remark}
The generating function $f_{C_r}(t)$ is the quotient of the
poincar$\acute{\mbox{e}}$ polynomial of Weyl group of type $C_r$
divided by the poincar$\acute{\mbox{e}}$ polynomial of Weyl group of
type $A_{r-1}$. It is not accidental. In fact, we have showed in
\cite{L} that the cohomology of the nilradical of $\mathfrak{b}$ (in
type C) can be expressed as a sum over $S_r \times \Upsilon_r$,
where $S_r$ is the $r$-th symmetric group (also the Weyl group of
type $A_{r-1}$).
\end{remark}
\begin{remark}
It is clear that $f_{C_r}(1)=2^r$. Hence we get the Peterson's $2^r$
theorem for type $C$.
\end{remark}
\subsection{Type $B_r$}\

For type $B_r$, we have
\begin{equation}
\Pi=\{\alpha_1=\epsilon_1-\epsilon_2,\alpha_2=\epsilon_2-\epsilon_3,\ldots,
\alpha_{r-1}=\epsilon_{r-1}-\epsilon_{r},\alpha_r=\epsilon_r\}
\end{equation} and
\begin{eqnarray}
&&\Phi_+=\{\epsilon_i-\epsilon_j=\alpha_i+\cdots+\alpha_{j-1},\
\epsilon_k=\alpha_k+\cdots+\alpha_r,
\\&&\quad\quad
\epsilon_i+\epsilon_j=\alpha_i+\cdots+\alpha_{j-1}+2\alpha_j+\cdots+2\alpha_r\mid
1\leq i<j\leq r, 1\leq k\leq r \}.\nonumber
\end{eqnarray}
The highest root is
\begin{equation}
\theta=\epsilon_1+\epsilon_2=\alpha_1+2\alpha_2+\cdots+2\alpha_r.
\end{equation}
Hence
\begin{equation}
\Omega=\{\epsilon_1,\epsilon_1-\epsilon_j, \epsilon_i+\epsilon_j\mid
1\leq i<j\leq r\}.
\end{equation}

However, by proposition 2.3, there are several restrictive
conditions for us to take incomparable admissible subset in
$\Omega$. They are induced from
\begin{equation}
(\epsilon_1-\epsilon_r)+(\epsilon_2+\epsilon_r)=(\epsilon_1-\epsilon_{r-1})+(\epsilon_2+\epsilon_{r-1})
=\cdots=(\epsilon_1-\epsilon_3)+(\epsilon_2+\epsilon_3)=\theta.
\end{equation}
(Hereinafter, a restrictive condition induced from
$\alpha+\beta=\theta$ means that any two elements
$\alpha',\beta'\in\Omega$ with $\alpha'\prec\alpha$ and
$\beta'\prec\beta$ can not be both in a $\Psi\in\Upsilon$. For
example, the restrictive condition, that $\epsilon_1-\epsilon_2$ and
$\epsilon_2+\epsilon_3$ can not be both in a $\Psi\in\Upsilon$, is
induced from
$(\epsilon_1-\epsilon_3)+(\epsilon_2+\epsilon_3)=\theta$ since
$\epsilon_1-\epsilon_2\prec\epsilon_1-\epsilon_3$ and
$\epsilon_2+\epsilon_3\prec\epsilon_2+\epsilon_3$.)

We draw the Hasse diagram of $\Omega$ as follows.

\begin{picture}(0,0)
\put(-12,-5){$\epsilon_{r-1}+\epsilon_r$}
\put(12,-8){\line(2,-1){10}}
\put(15,-18){$\epsilon_{r-2}+\epsilon_r$}
\put(12,-26){\line(2,1){10}} \put(49,-22){\line(2,-1){10}}
\put(-12,-32){$\epsilon_{r-2}+\epsilon_{r-1}$}
\put(44,-32){$\epsilon_{r-3}+\epsilon_{r}$}
\put(12,-35){\line(2,-1){10}} \put(49,-39){\line(2,1){10}}
\put(80,-35){\line(2,-1){10}}
\put(15,-45){$\epsilon_{r-3}+\epsilon_{r-1}$}
\put(78,-45){$\epsilon_{r-4}+\epsilon_{r}$}
\put(12,-53){\line(2,1){10}} \put(49,-49){\line(2,-1){10}}
\put(80,-53){\line(2,1){10}} \put(110,-49){\line(2,-1){10}}
\put(-12,-59){$\epsilon_{r-3}+\epsilon_{r-2}$}
\put(44,-59){$\epsilon_{r-4}+\epsilon_{r-1}$}
\put(107,-59){$\epsilon_{r-5}+\epsilon_{r}$}
\put(12,-62){\line(2,-1){10}} \put(49,-66){\line(2,1){10}}
\put(80,-62){\line(2,-1){10}} \put(110,-66){\line(2,1){10}}
\put(140,-62){\line(2,-1){10}}
\put(15,-72){$\epsilon_{r-4}+\epsilon_{r-2}$}
\put(78,-72){$\epsilon_{r-5}+\epsilon_{r-1}$}
\put(140,-72){$\epsilon_{r-6}+\epsilon_{r}$}
\put(12,-80){\line(2,1){10}} \put(49,-76){\line(2,-1){10}}
\put(80,-80){\line(2,1){10}} \put(110,-76){\line(2,-1){10}}
\put(140,-80){\line(2,1){10}} \put(170,-76){\line(2,-1){10}}
\put(5,-86){$\cdots$} \put(30,-86){$\cdots$} \put(66,-86){$\cdots$}
\put(125,-86){$\cdots$} \put(185,-86){$\cdots$}
\put(12,-88){\line(2,-1){10}} \put(49,-92){\line(2,1){10}}
\put(80,-88){\line(2,-1){10}} \put(110,-92){\line(2,1){10}}
\put(140,-88){\line(2,-1){10}} \put(170,-92){\line(2,1){10}}
\put(198,-88){\line(2,-1){10}} \put(5,-86){$\cdots$}
\put(30,-98){$\cdots$} \put(66,-98){$\cdots$} \put(95,-98){$\cdots$}
\put(125,-98){$\cdots$} \put(140,-98){$\epsilon_{2}+\epsilon_{r-1}$}
\put(200,-98){$\epsilon_{1}+\epsilon_{r}$}
\put(12,-106){\line(2,1){10}} \put(49,-102){\line(2,-1){10}}
\put(80,-106){\line(2,1){10}} \put(110,-102){\line(2,-1){10}}
\put(140,-106){\line(2,1){10}} \put(170,-102){\line(2,-1){10}}
\put(200,-106){\line(2,1){10}} \put(5,-86){$\cdots$}
\put(5,-112){$\cdots$} \put(30,-112){$\cdots$}
\put(66,-112){$\cdots$} \put(125,-112){$\cdots$}
\put(185,-112){$\cdots$} \put(12,-116){\line(2,-1){10}}
\put(49,-120){\line(2,1){10}} \put(80,-116){\line(2,-1){10}}
\put(110,-120){\line(2,1){10}} \put(140,-116){\line(2,-1){10}}
\put(170,-120){\line(2,1){10}}
\put(20,-126){$\epsilon_{3}+\epsilon_{5}$}
\put(85,-126){$\epsilon_{2}+\epsilon_{6}$}
\put(150,-126){$\epsilon_{1}+\epsilon_{7}$}
\put(12,-134){\line(2,1){10}} \put(49,-130){\line(2,-1){10}}
\put(80,-134){\line(2,1){10}} \put(110,-130){\line(2,-1){10}}
\put(140,-134){\line(2,1){10}}
\put(-12,-140){$\epsilon_{3}+\epsilon_4$}
\put(54,-140){$\epsilon_{2}+\epsilon_{5}$}
\put(117,-140){$\epsilon_{1}+\epsilon_{6}$}
\put(12,-143){\line(2,-1){10}} \put(49,-148){\line(2,1){10}}
\put(80,-143){\line(2,-1){10}} \put(110,-148){\line(2,1){10}}
\put(20,-154){$\epsilon_{2}+\epsilon_{4}$}
\put(85,-154){$\epsilon_{1}+\epsilon_{5}$}
\put(12,-162){\line(2,1){10}} \put(49,-158){\line(2,-1){10}}
\put(80,-162){\line(2,1){10}}
\put(-12,-168){$\epsilon_{2}+\epsilon_3$}
\put(54,-168){$\epsilon_{1}+\epsilon_{4}$}
\put(12,-171){\line(2,-1){10}} \put(49,-176){\line(2,1){10}}
\put(20,-182){$\epsilon_{1}+\epsilon_{3}$}
\put(12,-190){\line(2,1){10}}
\put(-12,-196){$\epsilon_{1}+\epsilon_2$}
\put(200,0){$\epsilon_1-\epsilon_2$} \put(214,-11){\line(0,1){10}}
\put(200,-18){$\epsilon_1-\epsilon_3$} \put(214,-29){\line(0,1){10}}
\put(214,-43){$\vdots$} \put(214,-55){\line(0,1){10}}
\put(200,-60){$\epsilon_1-\epsilon_r$} \put(214,-73){\line(0,1){10}}
\put(213,-80){$\epsilon_1$} \put(214,-90){\line(0,1){10}}
\put(90,-196){(Figure 2)}
\end{picture}
\vspace{7cm}

Set
\begin{equation}
\Upsilon_r(n):=\left\{ \begin{array}{ll} \{\Psi\in\Upsilon\mid
\epsilon_1-\epsilon_2\in \Psi\},& (n=2)\\
 \{\Psi\in\Upsilon\mid
\epsilon_1-\epsilon_n\in \Psi \mbox{ but
}\epsilon_1-\epsilon_{n-1}\not\in\Psi\},& (n=3,\ldots,r)\\
\{\Psi\in\Upsilon\mid \epsilon_1\in\Psi \mbox{ but
}\epsilon_1-\epsilon_{r}\not\in\Psi\}, &(n=r+1)\\
\{\Psi\in\Upsilon\mid\epsilon_1\not\in\Psi\}, &(n=r+2).
\end{array}\right.
\end{equation}
It is clear that
\begin{equation}
\Upsilon_r=\bigcup_{n=2}^{r+2}\Upsilon_r(n), \quad\mbox{which is a
disjoint union.}
\end{equation}

Define
\begin{equation} f_{B_r}(t):=\sum_{i=0}^\infty |\mathcal
{I}^{(i)}|t^i=\sum_{i=0}^\infty
|\Upsilon_r^{(i)}|t^i=\sum_{n=2}^{r+2}\sum_{i=0}^\infty
|\Upsilon_r^{(i)}(n)|t^i,
\end{equation}
where $\Upsilon_r^{(i)}(n):=\Upsilon_r^{(i)}\bigcap\Upsilon_r(n)$.

Observe that there is an isomorphism between two partially ordered
sets:
\begin{eqnarray}
\Omega_r\backslash\{\epsilon_1-\epsilon_i,\epsilon_1\mid 1\leq i\leq
r\}&\simeq&\Omega_{r-1}^C:\\
\epsilon_i+\epsilon_j(i<j)&\mapsto&\epsilon_i+\epsilon_{j-1}(i\leq
j-1),\nonumber
\end{eqnarray}
where $\Omega_{r-1}^C$ is the set $\Omega_{r-1}$ of type $C$. Hence
\begin{equation}
\sum_{i=0}^\infty
|\Upsilon_r^{(i)}(r+2)|t^i=f_{C_{r-1}}(t)=(1+t)(1+t^2)\cdots(1+t^{r-1}).
\end{equation}
Furthermore, for any $\Psi\in\Upsilon_r(n)\ (3\leq n\leq r+1)$,
besides the $2r-n+1$ elements:
$\epsilon_1+\epsilon_2,\epsilon_1+\epsilon_3,\ldots,\epsilon_1+\epsilon_{r},\epsilon_1,\epsilon_1-\epsilon_r,\epsilon_1-\epsilon_{r-1},\ldots,\epsilon_1-\epsilon_n$
(when $n=r+1$, $\epsilon_1-\epsilon_{r+1}$ means $\epsilon_1$ for
convenience), the rest elements in $\Psi$ should constitute an
increasing subset of a partially ordered set which is isomorphic to
$\Omega_{n-3}^C$. Hence for $3\leq n\leq r+1$,
\begin{equation}
\sum_{i=0}^\infty
|\Upsilon_r^{(i)}(n)|t^i=t^{2r-n+1}f_{C_{n-3}}(t)=t^{2r-n+1}(1+t)(1+t^2)\cdots(1+t^{n-3}).
\end{equation}

At last, we can get immediately that
\begin{equation}
\Upsilon_r(2)=\{\{\epsilon_1\pm\epsilon_i,\epsilon_1\mid 1\leq i\leq
r\}\}.
\end{equation}
Hence
\begin{equation}
\sum_{i=0}^\infty|\Upsilon_r^{(i)}(2)|t^i=t^{2r-1}.
\end{equation}

Combine (4.23), (4.25), (4.26) and (4.28), then we get
\begin{proposition}
The number of abelian ideals with dimension $m$ in a Borel
subalgebra of $\mathfrak{o}_{2r}$ is equal to the coefficient of
$t^m$ in the polynomial
\begin{equation}
 f_{B_r}(t)=t^{2r-1}+\sum_{k=0}^{r-2}t^{2r-k-2}\prod_{j=1}^{k}(1+t^j)+\prod_{j=1}^{r-1}(1+t^j).
\end{equation}\hfill$\Box$
\end{proposition}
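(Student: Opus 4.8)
The plan is to follow exactly the same bookkeeping scheme that worked for type $C_r$, namely to partition $\Upsilon_r$ according to which of the roots $\epsilon_1\pm\epsilon_i$ and $\epsilon_1$ lies in $\Psi$, and then to show that each block of the partition contributes a single monomial times a type-$C$ generating polynomial. Concretely, I would first record the root data (4.18)--(4.20) and compute $\Omega$ in (4.21), then extract from Proposition~2.3 the restrictive conditions, which by (4.22) all come from the relations $(\epsilon_1-\epsilon_j)+(\epsilon_2+\epsilon_j)=\theta$ for $3\le j\le r$. The upshot I want is that $\Omega$ splits, up to these restrictions, into the ``short chain'' $\epsilon_1-\epsilon_2\succ\epsilon_1-\epsilon_3\succ\cdots\succ\epsilon_1-\epsilon_r\succ\epsilon_1$ and the ``$C$-type block'' $\{\epsilon_i+\epsilon_j\mid 1\le i<j\le r\}$, whose Hasse diagram (Figure~2) is isomorphic as a poset to $\Omega^C_{r-1}$ via $\epsilon_i+\epsilon_j\mapsto\epsilon_i+\epsilon_{j-1}$; this is the content of (4.24).

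Next I would set up the disjoint decomposition (4.23) with the sets $\Upsilon_r(n)$ defined in (4.22): $\Upsilon_r(r+2)$ is the part avoiding $\epsilon_1$ altogether, $\Upsilon_r(r+1)$ contains $\epsilon_1$ but not $\epsilon_1-\epsilon_r$, and for $2\le n\le r$, $\Upsilon_r(n)$ contains $\epsilon_1-\epsilon_n$ but not $\epsilon_1-\epsilon_{n-1}$. For each block I must identify the forced elements and the remaining freedom. For $\Upsilon_r(r+2)$: since $\epsilon_1\notin\Psi$ and $\Psi$ is increasing, no $\epsilon_1-\epsilon_i$ is in $\Psi$ either, so $\Psi$ lives entirely in the $C$-type block and, via (4.24), ranges over all increasing subsets of $\Omega^C_{r-1}$; by (4.6) and (4.16) this gives the term $f_{C_{r-1}}(t)=\prod_{j=1}^{r-1}(1+t^j)$, which is (4.25). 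For $3\le n\le r+1$, an increasing $\Psi$ containing $\epsilon_1-\epsilon_n$ is forced to contain all of $\epsilon_1-\epsilon_2,\ldots,\epsilon_1-\epsilon_n,\epsilon_1$, and then, chasing the Hasse diagram, also $\epsilon_1+\epsilon_2,\ldots,\epsilon_1+\epsilon_r$; the restrictive conditions from (4.22) then forbid $\epsilon_2+\epsilon_j$ for $n\le j\le r$, so the only freedom left is an increasing subset of the sub-poset spanned by $\{\epsilon_i+\epsilon_j\mid 1\le i<j\le n-1\}$, which is isomorphic to $\Omega^C_{n-3}$. Counting the $2r-n+1$ forced roots gives the term $t^{2r-n+1}f_{C_{n-3}}(t)$, i.e. (4.26); reindexing $k=n-3$ produces $\sum_{k=0}^{r-2}t^{2r-k-2}\prod_{j=1}^{k}(1+t^j)$. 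Finally $\Upsilon_r(2)$ forces every element of the short chain and then the whole ``edge'' $\epsilon_1\pm\epsilon_i$ plus $\epsilon_1$, while the restrictive conditions kill every $\epsilon_2+\epsilon_j$; a direct check shows $\Upsilon_r(2)$ is the single ideal (4.27) of dimension $2r-1$, giving the term $t^{2r-1}$ as in (4.28). Summing (4.23), (4.25), (4.26) and (4.28) yields the stated polynomial $f_{B_r}(t)$, and by (2.2) its $t^m$-coefficient counts the $m$-dimensional abelian ideals.

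The step I expect to be the main obstacle is the careful verification, for $3\le n\le r+1$, that once $\epsilon_1-\epsilon_n\in\Psi$ the ``free part'' is \emph{exactly} an increasing subset of a copy of $\Omega^C_{n-3}$ and nothing more: I have to confirm both that every $\epsilon_1+\epsilon_j$ and every $\epsilon_i+\epsilon_j$ with $j\le n-1$, $i<j$ really is available (no hidden restriction from Proposition~2.3 since $\alpha+\beta\not\prec\theta$ for such pairs, as in the type-$C$ analysis), and that every $\epsilon_2+\epsilon_j$ with $j\ge n$ is genuinely excluded (because $\epsilon_1-\epsilon_n$ together with $\epsilon_2+\epsilon_n$, hence with any $\epsilon_2+\epsilon_j$ below it for $j\ge n$, would violate (4.22)) --- and also that $\epsilon_i+\epsilon_j$ with $i\ge 3$ causes no conflict. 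This is precisely where the poset isomorphism (4.24) and the list of restrictive relations (4.22) must be used in tandem; everything else is the routine geometric-series bookkeeping already rehearsed in (4.13)--(4.16).
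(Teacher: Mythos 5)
Your proposal follows the paper's proof essentially verbatim: the same partition $\Upsilon_r=\bigcup_{n=2}^{r+2}\Upsilon_r(n)$ according to the least root among $\epsilon_1-\epsilon_i,\epsilon_1$ present in $\Psi$, the same identification of each block as $2r-n+1$ forced roots together with an increasing subset of a poset isomorphic to $\Omega^{C}_{n-3}$ (using the type-$C$ generating functions and the restrictive relations $(\epsilon_1-\epsilon_j)+(\epsilon_2+\epsilon_j)=\theta$), and the same assembly of the terms $t^{2r-1}$, $t^{2r-n+1}f_{C_{n-3}}(t)$ and $f_{C_{r-1}}(t)$. The only slip is notational: the free sub-poset for $\Upsilon_r(n)$ should be $\{\epsilon_i+\epsilon_j\mid 2\le i<j\le n-1\}$ rather than $1\le i<j\le n-1$, since the roots $\epsilon_1+\epsilon_j$ are already counted among the forced ones, as you yourself note.
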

\begin{remark}
Let $t=1$ in (4.29), then we get $f_{B_r}(1)=2^r$, which is
Peterson's $2^r$ theorem for type $B$.
\end{remark}

\subsection{Type $D_r$}\

For type $D_r$, we have
\begin{equation}
\Pi=\{\alpha_1=\epsilon_1-\epsilon_2,\alpha_2=\epsilon_2-\epsilon_3,\ldots,
\alpha_{r-1}=\epsilon_{r-1}-\epsilon_{r},\alpha_r=\epsilon_{r-1}+\epsilon_r\}
\end{equation} and
\begin{eqnarray}
&&\Phi_+= \{\epsilon_i-\epsilon_j=\alpha_i+\cdots+\alpha_{j-1}
(1\leq i<j\leq r),\\
&&\quad\quad\quad\epsilon_i+\epsilon_r=\alpha_i+\cdots+\alpha_{r-2}+\alpha_{r} (1\leq i<r),\nonumber\\
&&
\epsilon_i+\epsilon_j=\alpha_i+\cdots+\alpha_{j-1}+2\alpha_{j}+\cdots+2\alpha_{r-2}+\alpha_{r-1}+\alpha_r(1\leq
i<j<r)\}.\nonumber
\end{eqnarray}
The highest root is
\begin{equation}
\theta=\epsilon_1+\epsilon_2=\alpha_1+2\alpha_2+\cdots+2\alpha_{r-2}+\alpha_{r-1}+\alpha_r.
\end{equation}
Hence
\begin{equation}
\Omega=\{\epsilon_i+\epsilon_j,\epsilon_1-\epsilon_j,\epsilon_i-\epsilon_r\mid
1\leq i<j\leq r\}.
\end{equation}

We draw the Hasse diagram of $\Omega$ as follows.

\begin{picture}(0,0)
\put(-12,-5){$\epsilon_{r-1}+\epsilon_r$}
\put(51,-5){$\epsilon_{r-1}-\epsilon_r$}
\put(12,-8){\line(2,-1){10}} \put(75,-8){\line(2,-1){10}}
\put(15,-18){$\epsilon_{r-2}+\epsilon_r$}
\put(78,-18){$\epsilon_{r-2}-\epsilon_r$}
\put(30,-28){\line(6,1){50}} \put(12,-26){\line(2,1){10}}
\put(49,-22){\line(2,-1){10}} \put(112,-22){\line(2,-1){10}}
\put(-12,-32){$\epsilon_{r-2}+\epsilon_{r-1}$}
\put(44,-32){$\epsilon_{r-3}+\epsilon_{r}$}
\put(107,-32){$\epsilon_{r-3}-\epsilon_{r}$}
\put(57,-41){\line(6,1){50}} \put(12,-35){\line(2,-1){10}}
\put(49,-39){\line(2,1){10}} \put(80,-35){\line(2,-1){10}}
\put(143,-35){\line(2,-1){10}}
\put(15,-45){$\epsilon_{r-3}+\epsilon_{r-1}$}
\put(78,-45){$\epsilon_{r-4}+\epsilon_{r}$}
\put(141,-45){$\epsilon_{r-4}-\epsilon_{r}$}
\put(87,-54){\line(6,1){50}} \put(12,-53){\line(2,1){10}}
\put(49,-49){\line(2,-1){10}} \put(80,-53){\line(2,1){10}}
\put(110,-49){\line(2,-1){10}} \put(173,-49){\line(2,-1){10}}
\put(-12,-59){$\epsilon_{r-3}+\epsilon_{r-2}$}
\put(44,-59){$\epsilon_{r-4}+\epsilon_{r-1}$}
\put(107,-59){$\epsilon_{r-5}+\epsilon_{r}$}
\put(170,-59){$\epsilon_{r-5}-\epsilon_{r}$}
\put(117,-69){\line(6,1){50}} \put(12,-62){\line(2,-1){10}}
\put(49,-66){\line(2,1){10}} \put(80,-62){\line(2,-1){10}}
\put(110,-66){\line(2,1){10}} \put(140,-62){\line(2,-1){10}}
\put(203,-62){\line(2,-1){10}}
\put(15,-72){$\epsilon_{r-4}+\epsilon_{r-2}$}
\put(78,-72){$\epsilon_{r-5}+\epsilon_{r-1}$}
\put(140,-72){$\epsilon_{r-6}+\epsilon_{r}$}
\put(203,-72){$\epsilon_{r-6}-\epsilon_{r}$}
\put(145,-82){\line(6,1){50}} \put(12,-80){\line(2,1){10}}
\put(49,-76){\line(2,-1){10}} \put(80,-80){\line(2,1){10}}
\put(110,-76){\line(2,-1){10}} \put(140,-80){\line(2,1){10}}
\put(170,-76){\line(2,-1){10}} \put(233,-76){\line(2,-1){10}}
\put(5,-86){$\cdots$} \put(30,-86){$\cdots$} \put(66,-86){$\cdots$}
\put(125,-86){$\cdots$} \put(185,-86){$\cdots$}
\put(243,-88){$\cdots$} \put(12,-88){\line(2,-1){10}}
\put(49,-92){\line(2,1){10}} \put(80,-88){\line(2,-1){10}}
\put(110,-92){\line(2,1){10}} \put(140,-88){\line(2,-1){10}}
\put(170,-92){\line(2,1){10}} \put(198,-88){\line(2,-1){10}}
\put(261,-88){\line(2,-1){10}} \put(5,-86){$\cdots$}
\put(30,-98){$\cdots$} \put(66,-98){$\cdots$} \put(95,-98){$\cdots$}
\put(125,-98){$\cdots$} \put(140,-98){$\epsilon_{2}+\epsilon_{r-1}$}
\put(200,-98){$\epsilon_{1}+\epsilon_{r}$}
\put(263,-98){$\epsilon_{1}-\epsilon_{r}$}
\put(205,-108){\line(6,1){50}} \put(12,-106){\line(2,1){10}}
\put(49,-102){\line(2,-1){10}} \put(80,-106){\line(2,1){10}}
\put(110,-102){\line(2,-1){10}} \put(140,-106){\line(2,1){10}}
\put(170,-102){\line(2,-1){10}} \put(200,-106){\line(2,1){10}}
\put(5,-86){$\cdots$} \put(5,-112){$\cdots$} \put(30,-112){$\cdots$}
\put(66,-112){$\cdots$} \put(125,-112){$\cdots$}
\put(185,-112){$\cdots$} \put(12,-116){\line(2,-1){10}}
\put(49,-120){\line(2,1){10}} \put(80,-116){\line(2,-1){10}}
\put(110,-120){\line(2,1){10}} \put(140,-116){\line(2,-1){10}}
\put(170,-120){\line(2,1){10}}
\put(20,-126){$\epsilon_{3}+\epsilon_{5}$}
\put(85,-126){$\epsilon_{2}+\epsilon_{6}$}
\put(150,-126){$\epsilon_{1}+\epsilon_{7}$}
\put(12,-134){\line(2,1){10}} \put(49,-130){\line(2,-1){10}}
\put(80,-134){\line(2,1){10}} \put(110,-130){\line(2,-1){10}}
\put(140,-134){\line(2,1){10}}
\put(-12,-140){$\epsilon_{3}+\epsilon_4$}
\put(54,-140){$\epsilon_{2}+\epsilon_{5}$}
\put(117,-140){$\epsilon_{1}+\epsilon_{6}$}
\put(12,-143){\line(2,-1){10}} \put(49,-148){\line(2,1){10}}
\put(80,-143){\line(2,-1){10}} \put(110,-148){\line(2,1){10}}
\put(20,-154){$\epsilon_{2}+\epsilon_{4}$}
\put(85,-154){$\epsilon_{1}+\epsilon_{5}$}
\put(12,-162){\line(2,1){10}} \put(49,-158){\line(2,-1){10}}
\put(80,-162){\line(2,1){10}}
\put(-12,-168){$\epsilon_{2}+\epsilon_3$}
\put(54,-168){$\epsilon_{1}+\epsilon_{4}$}
\put(12,-171){\line(2,-1){10}} \put(49,-176){\line(2,1){10}}
\put(20,-182){$\epsilon_{1}+\epsilon_{3}$}
\put(12,-190){\line(2,1){10}}
\put(-12,-196){$\epsilon_{1}+\epsilon_2$}
\put(275,-85){$\epsilon_1-\epsilon_{r-1}$}
\put(280,-93){\line(2,1){10}} \put(230,-95){\line(6,1){50}}
\put(290,-80){\line(0,1){7}}
\put(275,-72){$\epsilon_1-\epsilon_{r-2}$}
\put(290,-67){\line(0,1){7}} \put(290,-50){$\vdots$}
\put(290,-30){\line(0,1){7}}
\put(275,-23){$\epsilon_1-\epsilon_{3}$}
\put(290,-17){\line(0,1){7}}
\put(275,-10){$\epsilon_1-\epsilon_{2}$}
\put(90,-196){(Figure 3)}
\end{picture}
\vspace{7.5cm}

There are also several restrictive conditions induced by
\begin{equation}
(\epsilon_1-\epsilon_3)+(\epsilon_2+\epsilon_3)=(\epsilon_1-\epsilon_4)+(\epsilon_2+\epsilon_4)=\cdots=(\epsilon_1-\epsilon_r)+(\epsilon_2+\epsilon_r)=(\epsilon_1+\epsilon_r)+(\epsilon_2-\epsilon_r)=\theta.
\end{equation}

Set
\begin{equation}
\Upsilon_r(n):=\left\{
\begin{array}{ll}
\{\Psi\in\Upsilon\mid \epsilon_1-\epsilon_2\in\Psi\},& (n=2)\\
\{\Psi\in\Upsilon\mid \epsilon_1-\epsilon_n\in\Psi\mbox{ but
}\epsilon_1-\epsilon_{n-1}\not\in\Psi\},&(3\leq n\leq r-1)\\
\{\Psi\in\Upsilon\mid \epsilon_1\pm\epsilon_r\in \Psi \mbox{ but
}\epsilon_1-\epsilon_{r-1}\not\in\Psi\},& (n=r)\\
\{\Psi\in\Upsilon\mid \mbox{either }\epsilon_1+\epsilon_r\not\in\Psi
\mbox{ or }\epsilon_1-\epsilon_r\not\in\Psi\},&(n=r+1).
\end{array} \right.
\end{equation}
and
\begin{equation}
\Upsilon_r^{(i)}(n):=\Upsilon_r^{(i)}\bigcap\Upsilon_r(n).
\end{equation}
Also we have
\begin{equation}
\Upsilon_r=\bigcup_{n=2}^{r}\Upsilon_r(n), \quad\mbox{which is a
disjoint union.}
\end{equation}

Define
\begin{equation} f_{D_r}(t):=\sum_{i=0}^\infty |\mathcal
{I}^{(i)}|t^i=\sum_{i=0}^\infty
|\Upsilon_r^{(i)}|t^i=\sum_{n=2}^{r}\sum_{i=0}^\infty
|\Upsilon_r^{(i)}(n)|t^i,
\end{equation}

In a similar way to (4.26) and (4.28), we can obtain that for $3\leq
n\leq r$
\begin{equation}
\sum_{i=0}^\infty
|\Upsilon_r^{(i)}(n)|t^i=t^{2r-n}f_{C_{n-3}}(t)=t^{2r-n}(1+t)(1+t^2)\cdots(1+t^{n-3})
\end{equation}
 and
\begin{equation}
\sum_{i=0}^\infty|\Upsilon_r^{(i)}(2)|t^i=t^{2r-2}.
\end{equation}
Now we are going to calculate
$\sum_{i=0}^\infty|\Upsilon_r^{(i)}(r+1)|t^i$.

Set
\begin{equation}
\Upsilon_r(r+1)^+:=\{\Psi\in\Upsilon_{r}(r+1)\mid
\epsilon_1+\epsilon_r\in\Psi\},
\end{equation}
\begin{equation}
\Upsilon_r(r+1)^-:=\{\Psi\in\Upsilon_{r}(r+1)\mid
\epsilon_1-\epsilon_r\in\Psi\}
\end{equation}
and
\begin{equation} \Upsilon_r(r+1)^0:=\{\Psi\in\Upsilon_{r}(r+1)\mid
\epsilon_1\pm\epsilon_r\not\in\Psi\}.
\end{equation}

we have
\begin{equation}
\Upsilon_r(r+1)=\Upsilon_r(r+1)^+\bigcup
\Upsilon_r(r+1)^-\bigcup\Upsilon_r(r+1)^0,
\end{equation}
which is a disjoint union. Hence
\begin{equation}
\sum_{i=0}^\infty|\Upsilon_r^{(i)}(r+1)|t^i=\sum_{i=0}^\infty|\Upsilon_r^{(i)}(r+1)^+|t^i
+\sum_{i=0}^\infty|\Upsilon_r^{(i)}(r+1)^-|t^i+\sum_{i=0}^\infty|\Upsilon_r^{(i)}(r+1)^0|t^i,
\end{equation}
where
$\Upsilon_r^{(i)}(r+1)^{\pm,0}:=\Upsilon_r(r+1)^{\pm,0}\bigcap\Upsilon_r^{(i)}$.

Observe that there are several isomorphisms between some partially
ordered sets:
\begin{eqnarray}
\Omega_{r-1}^C&\simeq&\Omega_r\backslash\{\epsilon_1-\epsilon_n,\epsilon_l-\epsilon_r\mid
2\leq n\leq r-1, 1\leq l\leq
r-1\}\\\nonumber&\simeq&\Omega_r\backslash\{\epsilon_1-\epsilon_n,\epsilon_l+\epsilon_r\mid
2\leq n\leq r-1, 1\leq l\leq r-1\}
\end{eqnarray}
and
\begin{eqnarray}
\Omega_{r-2}^C&\simeq&\Omega_r\backslash\{\epsilon_1-\epsilon_n,\epsilon_l\pm\epsilon_r\mid
2\leq n\leq r-1, 1\leq l\leq r-1\}.
\end{eqnarray}
Therefore
\begin{eqnarray}
&&\sum_{i=0}^\infty|\Upsilon_r^{(i)}(r+1)^+|t^i
=\sum_{i=0}^\infty|\Upsilon_r^{(i)}(r+1)^-|t^i\\
\nonumber&&=f_{C_{r-1}}(t)-f_{C_{r-2}}(t)=t^{r-1}(1+t)(1+t^2)\cdots(1+t^{r-2})
\end{eqnarray}
and
\begin{equation}
\sum_{i=0}^\infty|\Upsilon_r^{(i)}(r+1)^0|t^i=f_{C_{r-2}}(t)=(1+t)(1+t^2)\cdots(1+t^{r-2}).
\end{equation}

Thanks to (4.38),(4.39),(4.40),(4.45),(4.48) and (4.49), we have
\begin{proposition}
The number of abelian ideals with dimension $m$ in a Borel
subalgebra of $\mathfrak{o}_{2r+1}$ is equal to the coefficient of
$t^m$ in the polynomial
\begin{equation}
 f_{D_r}(t)=t^{2r-2}+\sum_{k=0}^{r-2}t^{2r-k-3}\prod_{j=1}^{k}(1+t^j)+\prod_{j=1}^{r-1}(1+t^j).
\end{equation}\hfill$\Box$
\end{proposition}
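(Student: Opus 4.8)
The plan is to compute the generating function $f_{D_r}(t)=\sum_i|\Upsilon_r^{(i)}|\,t^i$ of (4.39) by summing the contributions of the blocks $\Upsilon_r(n)$ introduced in (4.36), and then to collapse that sum into the stated closed form. Throughout I work with the root data (4.30)--(4.32) of $\mathfrak{o}_{2r+1}$ and with the poset $\Omega$ of (4.34). By (2.7), (2.9) and Proposition 2.3(1), an abelian ideal is the same datum as an increasing subset of $\Omega$ no two of whose elements sum to something $\prec\theta$, and the coupling constraints among such sums are exactly the relations $(\epsilon_1-\epsilon_k)+(\epsilon_2+\epsilon_k)=\theta$ $(3\le k\le r)$ together with $(\epsilon_1+\epsilon_r)+(\epsilon_2-\epsilon_r)=\theta$ recorded in (4.35). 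Since the blocks $\Upsilon_r(n)$ are defined by the depth of the chain $\epsilon_1-\epsilon_2\prec\epsilon_1-\epsilon_3\prec\cdots\prec\epsilon_1-\epsilon_r$ inside $\Psi$, they form a disjoint cover of $\Upsilon_r$, so it suffices to evaluate $\sum_i|\Upsilon_r^{(i)}(n)|\,t^i$ for each $n$ and add.

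First I would handle the lower blocks $2\le n\le r$. If $\epsilon_1-\epsilon_n\in\Psi$, then the increasing property forces into $\Psi$ every element of $\Omega$ lying above $\epsilon_1-\epsilon_n$ in the Hasse diagram (Figure 3); reading these off gives the $r-n+1$ roots $\epsilon_1-\epsilon_n,\ldots,\epsilon_1-\epsilon_r$ and the $r-1$ roots $\epsilon_1+\epsilon_2,\ldots,\epsilon_1+\epsilon_r$, a ``spine'' of $2r-n$ elements contributing the prefactor $t^{2r-n}$. The relations (4.35) then forbid $\epsilon_2+\epsilon_k$ for $k\ge n$ and $\epsilon_2-\epsilon_r$ (and everything below them), and after deleting both the spine and the forbidden roots the remaining choices are precisely the increasing subsets of a subposet of $\Omega$ which the isomorphism of this subsection identifies with $\Omega_{n-3}^{C}$. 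Proposition 4.1 then yields $t^{2r-n}f_{C_{n-3}}(t)=t^{2r-n}\prod_{j=1}^{n-3}(1+t^j)$, which is (4.40); the degenerate case $n=2$ gives just the monomial $t^{2r-2}$ of (4.41), consistent with the convention $f_{C_{-1}}\equiv1$ of (4.13).

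The crux is the top block $n=r+1$, where $\epsilon_1+\epsilon_r$ and $\epsilon_1-\epsilon_r$ are coupled through $(\epsilon_1+\epsilon_r)+(\epsilon_2-\epsilon_r)=\theta$ and cannot both be discarded freely. Here I would use the three-way split (4.42)--(4.44) of $\Upsilon_r(r+1)$ according to whether $\epsilon_1+\epsilon_r\in\Psi$, $\epsilon_1-\epsilon_r\in\Psi$, or neither, and verify that this is a genuine disjoint cover, which is (4.45). The two poset isomorphisms (4.46) identify each of the first two pieces with $\Omega_{r-1}^{C}$ once the root barred by the relevant relation in (4.35) is removed; requiring the distinguished root to be present turns the type $C$ count into $f_{C_{r-1}}(t)-f_{C_{r-2}}(t)=t^{r-1}\prod_{j=1}^{r-2}(1+t^j)$ for each, which is (4.48). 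The isomorphism (4.47) identifies the ``neither'' piece with $\Omega_{r-2}^{C}$, giving $f_{C_{r-2}}(t)=\prod_{j=1}^{r-2}(1+t^j)$, which is (4.49). Checking that these three subposet identifications are simultaneously compatible with \emph{all} the coupling relations (4.35) --- so that no configuration is double-counted or illegally admitted --- is the step I expect to be the main obstacle.

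Finally I would assemble the blocks. Adding the contributions gives
\[
f_{D_r}(t)=t^{2r-2}+\sum_{n=3}^{r}t^{2r-n}\prod_{j=1}^{n-3}(1+t^j)+\bigl(2t^{r-1}+1\bigr)\prod_{j=1}^{r-2}(1+t^j),
\]
where the last summand collects the $+,-,0$ pieces of the block $n=r+1$. Re-indexing the middle sum by $k=n-3$ turns it into $\sum_{k=0}^{r-3}t^{2r-k-3}\prod_{j=1}^{k}(1+t^j)$, and the elementary identity
\[
\bigl(2t^{r-1}+1\bigr)\prod_{j=1}^{r-2}(1+t^j)=t^{r-1}\prod_{j=1}^{r-2}(1+t^j)+\prod_{j=1}^{r-1}(1+t^j)
\]
lets me absorb $t^{r-1}\prod_{j=1}^{r-2}(1+t^j)$ as the $k=r-2$ term of the target sum while leaving the free term $\prod_{j=1}^{r-1}(1+t^j)$. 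This yields exactly $t^{2r-2}+\sum_{k=0}^{r-2}t^{2r-k-3}\prod_{j=1}^{k}(1+t^j)+\prod_{j=1}^{r-1}(1+t^j)$, which is the polynomial of the statement, completing the proof.
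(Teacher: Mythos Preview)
Your proposal is correct and follows essentially the same route as the paper: you use the decomposition (4.36)--(4.38) into blocks $\Upsilon_r(n)$, compute each block via the identifications (4.40), (4.41), (4.46)--(4.49) with type $C$ posets, and then sum. Your closing algebra---re-indexing by $k=n-3$ and using $(2t^{r-1}+1)\prod_{j=1}^{r-2}(1+t^j)=t^{r-1}\prod_{j=1}^{r-2}(1+t^j)+\prod_{j=1}^{r-1}(1+t^j)$ to absorb one copy of the $n=r+1$ contribution as the $k=r-2$ term---is in fact more explicit than the paper, which simply cites (4.38), (4.39), (4.40), (4.45), (4.48), (4.49) and states (4.50). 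One small imprecision: your verbal description of the ``spine'' for $n=r$ (``$\epsilon_1-\epsilon_n,\ldots,\epsilon_1-\epsilon_r$ and $\epsilon_1+\epsilon_2,\ldots,\epsilon_1+\epsilon_r$'') does not quite match the actual definition of $\Upsilon_r(r)$ in (4.36), which requires \emph{both} $\epsilon_1+\epsilon_r$ and $\epsilon_1-\epsilon_r$ (these are incomparable in $D_r$), but your cardinality $2r-n=r$ is still correct and the rest of the argument is unaffected.
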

\begin{remark}
Peterson's $2^r$ theorem for type $D$ can be obtained by taking
$t=1$ in (4.50). That is $f_{D_r}(1)=2^r$.
\end{remark}

\section{Exceptional Types $E_6$, $E_7$, $E_8$, $F_4$ and $G_2$}
In this section, we will deal with the exceptional types. Precisely,
we will give the Hasse diagrams of $\Omega$ of these types and then
list all data on the number of abelian ideals with given dimension.
The method is just to enumerate all cases of incomparable admissible
subsets $\Psi_{\min}$ by the Hasse diagram of $\Omega$ and the
associative restrictive conditions, and then to count the total
number of elements in $\Psi$. However, we will elide the enumerating
process here to shorten the length of this paper.

\subsection{Type $G_2$}\

For type $G_2$, we have
\begin{equation}
\Pi=\{\alpha_1,\alpha_2\}
\end{equation} and
\begin{equation}
\Phi_+=\{\alpha_1,\alpha_2,\alpha_1+\alpha_2,\alpha_1+2\alpha_2,\alpha_1+3\alpha_2,2\alpha_1+3\alpha_2\}.
\end{equation}
The highest root is
\begin{equation}
\theta=2\alpha_1+3\alpha_2.
\end{equation}

The Hasse diagram of $\Omega$ is as follows:

\vspace{0.1cm}
\begin{picture}(0,0)
\put(39,4){$\alpha_1+2\alpha_2$} \put(55,-5){$|$}
\put(39,-15){$\alpha_1+3\alpha_2$} \put(55,-24){$|$}
\put(35,-34){$2\alpha_1+3\alpha_2$}\put(110,-30){(Figure 4)}
\end{picture}
\vspace{1.5cm}

It is clear that the sum of arbitrary two roots in $\Omega$ is not
less than or equal to $\theta$. So there are $4=2^2$ abelian ideals
in a Borel subalgebra of the simple Lie algebra of type $G_2$:
$\emptyset$, $\mathfrak{g}_{2\alpha_1+3\alpha_2}$,
$\mathfrak{g}_{\alpha_1+3\alpha_2}\oplus\mathfrak{g}_{2\alpha_1+3\alpha_2}$
and
$\mathfrak{g}_{\alpha_1+2\alpha_2}\oplus\mathfrak{g}_{\alpha_1+3\alpha_2}\oplus\mathfrak{g}_{2\alpha_1+3\alpha_2}$.

We list the number of abelian ideals with given dimension in the
following table:
\begin{eqnarray}
\begin{tabular}{|c|c|c|c|c|c|}
\hline \mbox{dimension} &0&1&2&3&\mbox{total}\\
\hline \mbox{number} &1&1&1&1&4 \\
\hline
\end{tabular}
\end{eqnarray}

\subsection{Type $F_4$}\

For type $F_4$, we have
\begin{equation}
\Pi=\{\alpha_1=\epsilon_2-\epsilon_3,
\alpha_2=\epsilon_3-\epsilon_4,\alpha_3=\epsilon_4,\alpha_4=\frac{1}{2}(\epsilon_1-\epsilon_2-\epsilon_3-\epsilon_4)\}.
\end{equation} and
\begin{equation}
\Phi_+=\{\epsilon_1,\epsilon_2,\epsilon_3,\epsilon_4,\epsilon_1\pm\epsilon_2,
\epsilon_1\pm\epsilon_3,\epsilon_1\pm\epsilon_4,\epsilon_2\pm\epsilon_3,\epsilon_2\pm\epsilon_4,\epsilon_3\pm\epsilon_4,
\frac{1}{2}(\epsilon_1\pm\epsilon_2\pm\epsilon_3\pm\epsilon_4)\}.
\end{equation}
The highest root is
\begin{equation}
\theta=\epsilon_1+\epsilon_2=2\alpha_1+3\alpha_2+4\alpha_3+2\alpha_4.
\end{equation}

The Hasse diagram of $\Omega$ is as follows:

\begin{picture}(0,0)
\put(30,-5){$\alpha_1+2\alpha_2+2\alpha_3$}
\put(140,-5){$\alpha_2+2\alpha_3+2\alpha_4$} \put(70,-15){$|$}
\put(165,-15){$|$}
\put(25,-25){$\alpha_1+2\alpha_2+2\alpha_3+\alpha_4$}\put(135,-25){$\alpha_1+\alpha_2+2\alpha_3+2\alpha_4$}
\put(40,-35){$/$} \put(100,-35){$\backslash$}\put(145,-35){$/$}
\put(-20,-45){$\alpha_1+2\alpha_2+3\alpha_3+\alpha_4$}
\put(90,-45){$\alpha_1+2\alpha_2+2\alpha_3+2\alpha_4$}
\put(40,-55){$\backslash$} \put(100,-55){$/$}
\put(25,-65){$\alpha_1+2\alpha_2+3\alpha_3+2\alpha_4$}
\put(70,-75){$|$}
\put(25,-85){$\alpha_1+2\alpha_2+4\alpha_3+2\alpha_4$}
\put(70,-95){$|$}
\put(25,-105){$\alpha_1+3\alpha_2+4\alpha_3+2\alpha_4$}
\put(70,-115){$|$}
\put(25,-125){$2\alpha_1+3\alpha_2+4\alpha_3+2\alpha_4$}
\put(180,-110){(Figure 5)}
\end{picture}
\vspace{4.5cm}

when we take incomparable admissible subsets, there are some
restrictive conditions induced by
\begin{eqnarray}
(\alpha_1+2\alpha_2+2\alpha_3)+(\alpha_1+\alpha_2+2\alpha_3+2\alpha_4)=\theta.
\end{eqnarray}

By Figure 5 and the restrictive conditions, we can list the number
of abelian ideals with given dimension in the following table:

\begin{eqnarray}
\begin{tabular}{|c|c|c|c|c|c|c|c|c|c|c|c|}
\hline \mbox{dimension} &0&1&2&3&4&5&6&7&8&9&\mbox{total}\\
\hline \mbox{number} &1&1&1&1&1&2&2&3&3&1&16\\
\hline
\end{tabular}
\end{eqnarray}

\subsection{Type $E_6$}\

For type $E_6$, we have
\begin{equation}
\Pi=\{\alpha_1=\frac{1}{2}(\epsilon_8-\sum_{j=2}^7\epsilon_j+\epsilon_1),
\alpha_2=\epsilon_2-\epsilon_1,\alpha_3=\epsilon_3-\epsilon_2,\alpha_4=\epsilon_4-\epsilon_3,
\alpha_5=\epsilon_5-\epsilon_4,\alpha_6=\epsilon_1+\epsilon_2\}
\end{equation}
and
\begin{eqnarray}
&&\Phi_+=\{\epsilon_i\pm\epsilon_j,
\frac{1}{2}(\epsilon_8-\epsilon_7-\epsilon_6+\sum_{k=1}^5\pm\epsilon_k)\mid
1\leq j<i\leq5,\\\nonumber &&\mbox{ the total number of + signs is
even in
}\frac{1}{2}(\epsilon_8-\epsilon_7-\epsilon_6+\sum_{k=1}^5\pm\epsilon_k)\}.
\end{eqnarray}
The highest root is
\begin{equation}
\theta=\frac{1}{2}(\epsilon_1+\epsilon_2+\epsilon_3+\epsilon_4+\epsilon_5-\epsilon_6-\epsilon_7+\epsilon_8)
=\alpha_1+2\alpha_2+3\alpha_3+2\alpha_4+\alpha_5+2\alpha_6.
\end{equation}
The Hasse diagram of $\Omega$ is as follows:

\begin{picture}(0,0)
\put(40,-5){$\alpha_1$} \put(240,-5){$\alpha_5$} \put(45,-15){$|$}
\put(245,-15){$|$} \put(30,-25){$\alpha_1+\alpha_2$}
\put(230,-25){$\alpha_4+\alpha_5$} \put(45,-35){$|$}
\put(245,-35){$|$} \put(20,-45){$\alpha_1+\alpha_2+\alpha_3$}
\put(220,-45){$\alpha_3+\alpha_4+\alpha_5$}
\put(20,-55){$/$}\put(70,-55){$\backslash$}\put(220,-55){$/$}\put(270,-55){$\backslash$}
\put(-50,-65){$\alpha_1+\alpha_2+\alpha_3+\alpha_6$}
\put(42,-65){$\alpha_1+\alpha_2+\alpha_3+\alpha_4$}
\put(140,-65){$\alpha_2+\alpha_3+\alpha_4+\alpha_5$}
\put(240,-65){$\alpha_3+\alpha_4+\alpha_5+\alpha_6$}
\put(10,-75){$\backslash$} \put(40,-75){$/$}
\put(100,-75){$\backslash$} \put(145,-75){$/$}
\put(200,-75){$\backslash$} \put(260,-75){$/$}
\put(-60,-85){$\alpha_1+\alpha_2+\alpha_3+\alpha_4+\alpha_6$}
\put(52,-85){$\alpha_1+\alpha_2+\alpha_3+\alpha_4+\alpha_5$}
\put(167,-85){$\alpha_2+\alpha_3+\alpha_4+\alpha_5+\alpha_6$}
\put(280,-85){$\alpha_2+2\alpha_3+\alpha_4+\alpha_6$}
\put(-10,-95){$|$} \put(45,-95){$\backslash$} \put(100,-95){$|$}
\put(170,-95){$/$} \put(220,-95){$\backslash$} \put(300,-95){$/$}
\put(30,-92){\line(6,0){300}} \put(30,-97){\line(0,1){5}}
\put(330,-92){\line(0,1){5}}
\put(-65,-105){$\alpha_1+\alpha_2+2\alpha_3+\alpha_4+\alpha_6$}
\put(55,-105){$\alpha_1+\alpha_2+\alpha_3+\alpha_4+\alpha_5+\alpha_6$}
\put(200,-105){$\alpha_2+2\alpha_3+\alpha_4+\alpha_5+\alpha_6$}
\put(-10,-115){$|$} \put(48,-115){$\backslash$} \put(110,-115){$|$}
\put(190,-115){$/$} \put(250,-115){$|$}
\put(-65,-125){$\alpha_1+2\alpha_2+2\alpha_3+\alpha_4+\alpha_6$}
\put(58,-125){$\alpha_1+\alpha_2+2\alpha_3+\alpha_4+\alpha_5+\alpha_6$}
\put(200,-125){$\alpha_2+2\alpha_3+2\alpha_4+\alpha_5+\alpha_6$}
\put(10,-135){$\backslash$} \put(70,-135){$/$}
\put(170,-135){$\backslash$} \put(210,-135){$/$}
\put(-20,-145){$\alpha_1+2\alpha_2+2\alpha_3+\alpha_4+\alpha_5+\alpha_6$}
\put(140,-145){$\alpha_1+\alpha_2+2\alpha_3+2\alpha_4+\alpha_5+\alpha_6$}
\put(80,-155){$\backslash$} \put(150,-155){$/$}
\put(50,-165){$\alpha_1+2\alpha_2+2\alpha_3+2\alpha_4+\alpha_5+\alpha_6$}
\put(110,-175){$|$}
\put(50,-185){$\alpha_1+2\alpha_2+3\alpha_3+2\alpha_4+\alpha_5+\alpha_6$}
\put(110,-195){$|$}
\put(50,-205){$\alpha_1+2\alpha_2+3\alpha_3+2\alpha_4+\alpha_5+2\alpha_6$}
\put(260,-200){(Figure 6)}
\end{picture}
\vspace{7.5cm}\\
At the same time, there are also several
restrictive conditions induced by
\begin{eqnarray}
&&(\alpha_1+\alpha_2+\alpha_3+\alpha_6)+(\alpha_2+2\alpha_3+2\alpha_4+\alpha_5+\alpha_6)\\
\nonumber&=&(\alpha_3+\alpha_4+\alpha_5+\alpha_6)+(\alpha_1+2\alpha_2+2\alpha_3+\alpha_4+\alpha_6)\\
\nonumber&=&(\alpha_1+\alpha_2+\alpha_3+\alpha_4+\alpha_6)+(\alpha_2+2\alpha_3+\alpha_4+\alpha_5+\alpha_6)\\
\nonumber&=&(\alpha_2+\alpha_3+\alpha_4+\alpha_5+\alpha_6)+(\alpha_1+\alpha_2+2\alpha_3+\alpha_4+\alpha_6)\\
\nonumber&=&(\alpha_2+2\alpha_3+\alpha_4+\alpha_6)+(\alpha_1+\alpha_2+\alpha_3+\alpha_4+\alpha_5+\alpha_6)=\theta
\end{eqnarray}

By Figure 6 and the restrictive conditions, we can list the number
of abelian ideals with given dimension in the following table:

\begin{equation}
\begin{tabular}{|c|c|c|c|c|c|c|c|c|c|c|c|c|c|c|c|c|c|c|}
\hline \mbox{dimension} &0&1&2&3&4&5&6&7&8&9&10&11&12&13&14&15&16&total\\
\hline \mbox{number} &1&1&1&1&2&3&3&4&6&7&8&10&7&4&2&2&2&64 \\
\hline
\end{tabular}
\end{equation}

\subsection{Type $E_7$}\

For type $E_7$, we have
\begin{eqnarray}
\Pi&=&\{\alpha_1=\frac{1}{2}(\epsilon_8-\epsilon_7-\epsilon_6-\epsilon_5-\epsilon_4-\epsilon_3-\epsilon_2+\epsilon_1),\\
&&\quad\alpha_2=\epsilon_2-\epsilon_1,\alpha_3=\epsilon_3-\epsilon_2,\alpha_4=\epsilon_4-\epsilon_3,\nonumber\\
&&\quad\alpha_5=\epsilon_5-\epsilon_4,\alpha_6=\epsilon_6-\epsilon_5,\alpha_7=\epsilon_1+\epsilon_2\}\nonumber
\end{eqnarray}
and
\begin{eqnarray}
\Phi_+=\{\epsilon_i\pm\epsilon_j,\epsilon_8-\epsilon_7,
\frac{1}{2}(\epsilon_8-\epsilon_7+\sum_{k=1}^6\pm\epsilon_k)\mid
1\leq j<i\leq6,\\\mbox{ the total number of + signs is even in
}\frac{1}{2}(\epsilon_8-\epsilon_7+\sum_{k=1}^6\pm\epsilon_k)\}.\nonumber
\end{eqnarray}
The highest root is
\begin{equation}
\theta=\epsilon_8-\epsilon_7=2\alpha_1+3\alpha_2+4\alpha_3+3\alpha_4+2\alpha_5+\alpha_6+2\alpha_7.
\end{equation}
We use $(i_1i_2\ldots i_7)$ to denote the root
$i_1\alpha_1+i_2\alpha_2+\cdots+i_7\alpha_7$ for short. The Hasse
diagram of $\Omega$ is drawn as follows.

\begin{picture}(0,0)
\put(40,-5){(0000010)} \put(60,-15){$|$} \put(40,-25){(0000110)}
\put(60,-35){$|$} \put(40,-45){(0001110)} \put(60,-55){$|$}
\put(40,-65){(0011110)} \put(60,-75){$|$}
\put(83,-67){\line(3,-1){30}}

\put(40,-85){(0111110)} \put(110,-85){(0011111)} \put(45,-95){$/$}
\put(75,-95){$\backslash$} \put(115,-95){$/$}
\put(10,-105){(1111110)} \put(75,-105){(0111111)}
\put(45,-115){$\backslash$} \put(75,-115){$/$}
\put(115,-115){$\backslash$} \put(40,-125){(1111111)}
\put(110,-125){(0121111)} \put(180,-125){(0122101)}
\put(250,-125){(1221001)} \put(75,-135){$\backslash$}
\put(115,-135){$/$} \put(145,-135){$\backslash$} \put(185,-135){$/$}
\put(215,-135){$\backslash$} \put(285,-135){$\backslash$}
\put(75,-145){(1121111)} \put(145,-145){(0122111)}
\put(215,-145){(1122101)} \put(285,-145){(1221101)}
\put(115,-148){\line(3,-1){30}} \put(115,-158){\line(3,1){30}}
\put(165,-155){$|$} \put(185,-158){\line(3,1){30}}
\put(255,-158){\line(3,1){30}} \put(255,-148){\line(3,-1){30}}
\put(305,-155){$|$} \put(100,-152){\line(1,0){135}}
\put(100,-152){\line(0,1){5}} \put(235,-157){\line(0,1){5}}
\put(75,-165){(0122211)} \put(145,-165){(1122111)}
\put(215,-165){(1221111)} \put(285,-165){(1222101)}
\put(115,-168){\line(3,-1){30}} \put(165,-175){$|$}
\put(185,-168){\line(3,-1){30}} \put(235,-175){$|$}
\put(255,-178){\line(3,1){30}} \put(305,-175){$|$}
\put(145,-185){(1122211)} \put(215,-185){(1222111)}
\put(285,-185){(1232101)} \put(180,-195){$\backslash$}
\put(215,-195){$/$} \put(250,-195){$\backslash$} \put(285,-195){$/$}
\put(320,-195){$\backslash$} \put(180,-205){(1222211)}
\put(250,-205){(1232111)} \put(320,-205){(1232102)}
\put(215,-215){$\backslash$} \put(250,-215){$/$}
\put(285,-215){$\backslash$} \put(320,-215){$/$}
\put(215,-225){(1232211)} \put(285,-225){(1232112)}
\put(215,-235){$/$} \put(250,-235){$\backslash$} \put(285,-235){$/$}
\put(180,-245){(1233211)} \put(250,-245){(1232212)}
\put(215,-255){$\backslash$} \put(250,-255){$/$}
\put(215,-265){(1233212)} \put(235,-275){$|$}
\put(215,-285){(1243212)} \put(235,-295){$|$}
\put(215,-305){(1343212)} \put(235,-315){$|$}
\put(215,-325){(2343212)} \put(70,-320){(Figure 7)}
\end{picture}
\vspace{12cm}

The restrictive conditions are induced by
\begin{eqnarray}
&&(1111110)+(1232102)=(1111111)+(1232101)\\\nonumber&&=(1221001)+(1122211)
=(1121111)+(1222101)\\\nonumber&&=(1122101)+(1221111)=(1221101)+(1122111)=\theta.
\end{eqnarray}

By Figure 7 and the restrictive conditions, we can list the number
of abelian ideals with given dimension in the following table:

\begin{eqnarray}
\begin{tabular}{|c|c|c|c|c|c|c|c|c|c|c|c|c|c|c|}
\hline \mbox{dimension} &0&1&2&3&4&5&6&7&8&9&10&11&12&13\\
\hline \mbox{number} &1&1&1&1&1&2&2&3&3&4&5&6&7&8 \\
\hline
\end{tabular}\\
\begin{tabular}{|c|c|c|c|c|c|c|c|c|c|c|c|c|c|c|}
\hline 14&15&16&17&18&19&20&21&22&23&24&25&26&27&total\\
\hline  10&11&13&15&11&7&5&3&3&1&1&1&1&1& 128 \\
\hline
\end{tabular}\nonumber
\end{eqnarray}

\subsection{Type $E_8$}
\

For type $E_8$, we have
\begin{equation}
\Pi=\{\alpha_1=\frac{1}{2}(\epsilon_8-\sum_{k=2}^7\epsilon_k+\epsilon_1),
\alpha_2=\epsilon_2-\epsilon_1,\ldots,\alpha_7=\epsilon_7-\epsilon_6,\alpha_8=\epsilon_1+\epsilon_2\}
\end{equation} and
\begin{eqnarray}
\Phi_+&=&\{\epsilon_i\pm\epsilon_j,\frac{1}{2}(\epsilon_8+\sum_{k=1}^7\pm\epsilon_k)\mid
1\leq j<i\leq8,\\\nonumber&&\quad\mbox{ the total number of + signs
is even in }\frac{1}{2}(\epsilon_8+\sum_{k=1}^7\pm\epsilon_k)\}.
\end{eqnarray}
The highest root is
\begin{equation}
\theta=\epsilon_8+\epsilon_7
=2\alpha_1+4\alpha_2+6\alpha_3+5\alpha_4+4\alpha_5+3\alpha_6+2\alpha_7+3\alpha_8.
\end{equation}
We use $(i_1i_2\ldots i_8)$ to denote the root
$i_1\alpha_1+i_2\alpha_2+\cdots+i_8\alpha_8$ for short. The Hasse
diagram of $\Omega$ is drawn as follows.

\begin{picture}(0,0)
\put(40,-5){(01222211)} \put(200,-5){(12321002)} \put(60,-15){$|$}
\put(220,-15){$|$} \put(40,-25){(11222211)}
\put(200,-25){(12321102)} \put(60,-35){$|$} \put(200,-35){$/$}
\put(240,-35){$\backslash$} \put(40,-45){(12222211)}
\put(110,-45){(12332101)} \put(170,-45){(12321112)}
\put(230,-45){(12322102)} \put(60,-55){$|$} \put(130,-55){$|$}
\put(140,-52){\line(6,0){100}} \put(140,-52){\line(0,1){5}}
\put(240,-57){\line(0,1){5}} \put(190,-55){$|$}
\put(210,-58){\line(2,1){20}} \put(250,-55){$|$}
\put(40,-65){(12322211)} \put(110,-65){(12332111)}
\put(170,-65){(12322112)} \put(92,-68){\line(2,-1){20}}
\put(230,-65){(12332102)} \put(60,-75){$|$}
\put(92,-78){\line(2,1){20}} \put(155,-78){\line(2,1){20}}
\put(155,-68){\line(2,-1){20}} \put(190,-75){$|$}
\put(210,-78){\line(2,1){20}} \put(250,-75){$|$}
\put(40,-85){(12332211)} \put(110,-85){(12322212)}
\put(170,-85){(12332112)} \put(230,-85){(12432102)}
\put(60,-95){$|$} \put(130,-95){$|$} \put(90,-88){\line(2,-1){20}}
\put(155,-98){\line(2,1){20}} \put(190,-95){$|$} \put(250,-95){$|$}
\put(210,-98){\line(2,1){20}} \put(40,-105){(12333211)}
\put(110,-105){(12332212)} \put(170,-105){(12432112)}
\put(230,-105){(13432102)} \put(155,-118){\line(2,1){20}}
\put(92,-118){\line(2,1){20}} \put(210,-118){\line(2,1){20}}
\put(60,-115){$|$} \put(130,-115){$|$} \put(190,-115){$|$}
\put(250,-115){$|$} \put(40,-125){(12333212)}
\put(110,-125){(12432212)} \put(170,-125){(13432112)}
\put(230,-125){(23432102)} \put(80,-135){$\backslash$}
\put(110,-135){$/$} \put(150,-135){$\backslash$} \put(170,-135){$/$}
\put(200,-135){$\backslash$} \put(230,-135){$/$}
\put(70,-145){(12433212)} \put(130,-145){(13432212)}
\put(190,-145){(23432112)} \put(80,-155){$/$}
\put(110,-155){$\backslash$} \put(150,-155){$/$}
\put(170,-155){$\backslash$} \put (200,-155){$/$}
\put(40,-165){(12443212)} \put(110,-165){(13433212)}
\put(170,-165){(23432212)} \put(80,-175){$\backslash$}
\put(110,-175){$/$} \put(150,-175){$\backslash$} \put(170,-175){$/$}
\put(70,-185){(13443212)} \put(130,-185){(23433212)}
\put(80,-195){$/$} \put(110,-195){$\backslash$} \put(150,-195){$/$}
\put(40,-205){(13543212)} \put(110,-205){(23443212)}
\put(60,-215){$|$} \put(130,-215){$|$}
\put(90,-208){\line(2,-1){20}} \put(40,-225){(13543213)}
\put(110,-225){(23543212)} \put(80,-235){$\backslash$}
\put(110,-235){$/$} \put(140,-235){$\backslash$}
\put(70,-245){(23543213)} \put(130,-245){(24543212)}
\put(110,-255){$\backslash$} \put(140,-255){$/$}
\put(100,-265){(24543213)} \put(120,-275){$|$}
\put(100,-285){(24643213)} \put(120,-295){$|$}
\put(100,-305){(24653213)} \put(120,-315){$|$}
\put(100,-325){(24654213)} \put(120,-335){$|$}
\put(100,-345){(24654313)} \put(120,-355){$|$}
\put(100,-365){(24654323)} \put(230,-355){(Figure 8)}
\end{picture}

\newpage
The restrictive conditions are induced by
\begin{eqnarray}
&&(01222211)+(23432112)=(11222211)+(13432112)\\&=&(12222211)+(12432112)
=(12322211)+(12332112)\nonumber\\
&=&(12321112)+(12333211)=(12332111)+(12322212)\nonumber\\
&=&(12322112)+(12332211)=\theta\nonumber
\end{eqnarray}

By Figure 8 and the restrictive conditions, we can list the number
of abelian ideals with given dimension in the following table:

\begin{eqnarray}
\begin{tabular}{|c|c|c|c|c|c|c|c|c|c|c|c|c|c|}
\hline \mbox{dimension} &0&1&2&3&4&5&6&7&8&9&10&11&12\\
\hline \mbox{number} &1&1&1&1&1&1&1&2&2&2&2&3&3 \\
\hline
\end{tabular}\\
\begin{tabular}{|c|c|c|c|c|c|c|c|c|c|c|c|c|}
\hline 13&14&15&16&17&18&19&20&21&22&23&24&25\\
\hline  4&5&5&5&6&7&8&9&10&11&12&14&15 \\
\hline
\end{tabular}\nonumber\\
\begin{tabular}{|c|c|c|c|c|c|c|c|c|c|c|c|}
\hline 26&27&28&29&30&31&32&33&34&35&36&\mbox{total}\\
\hline  17&18&20&22&17&12&8&5&3&1&1&256\\
\hline
\end{tabular}\nonumber
\end{eqnarray}

\vspace{1cm}

\vspace{0.5cm} \noindent{\Large \bf Acknowledgements}

Thanks are due to my thesis advisor Prof. Xiaoping Xu, from whom I
learned so much.

\bibliographystyle{amsplain}

\end{document}